\newtheorem{thm}{Theorem}[section]
\newtheorem{lem}[thm]{Lemma}
\newtheorem{pro}[thm]{Proposition}
\theoremstyle{definition}
\newtheorem{rem}[thm]{Remark}
\numberwithin{equation}{section}
\newcommand{\Lo}{\mathcal{L}}
\newcommand{\X}{\mathbb{X}}
\newcommand{\pr}{\mathbb{P}}
\newcommand{\ex}{\mathbb{E}}
\newcommand{\re}{\textup{Re}}
\newcommand{\im}{\textup{Im}}
\newcommand{\D}{\mathcal{D}_{\textup{ch}}(x)}
\newcommand{\Dg}{\widetilde{\mathcal{D}}_{\textup{ch}}(x)}
\newcommand{\F}{\mathcal{F}_{\textup{ch}}(h)}
\newcommand{\sums}{\sideset{}{^\flat}\sum}
\newcommand{\sumst}{\sideset{}{^\star}\sum}
\begin{document}

\baselineskip=17pt

\title[Class numbers in a special family of real quadratic fields]
{The distribution of class numbers in a special family of real quadratic fields}
\author{Alexander Dahl}
\author{Youness Lamzouri}

\address{Department of Mathematics and Statistics,
York University,
4700 Keele Street,
Toronto, ON,
M3J1P3
Canada}
\email{aodahl@mathstat.yorku.ca}
\email{lamzouri@mathstat.yorku.ca}

\date{}

\begin{abstract}  
We investigate the distribution of class numbers in the family of real quadratic fields $\mathbb{Q}(\sqrt{d})$ corresponding to fundamental discriminants of the form $d=4m^2+1$, which we refer to as Chowla's family. Our results show a strong similarity between the distribution of class numbers in this family and that of class numbers of imaginary quadratic fields. As an application of our results, we prove that the average order of the number of quadratic fields in Chowla's family with class number $h$ is $(\log h)/2G$, where $G$ is Catalan's constant. With minor modifications, one can obtain similar results for Yokoi's family of real quadratic fields $\mathbb{Q}(\sqrt{d})$, which correspond to fundamental discriminants of the form $d=m^2+4$.

\end{abstract}

\subjclass[2010]{Primary 11R11, 11M20}

\maketitle

\section{Introduction}

A fundamental problem in number theory is to understand the size of the class group of an algebraic number field $K$. This quantity, called the class number  of $K$, is a measure of how badly factorization in the ring of integers of $K$ fails to be unique. The case of quadratic fields had received great attention, and its rich history stretches back to the work of Gauss. Let $d$ be a fundamental discriminant and $h(d)$ be the class number of the quadratic field $\mathbb{Q}(\sqrt{d})$. Gauss conjectured that $h(d)\to \infty$ as $d\to -\infty$, and asked for the determination of all imaginary quadratic fields with a given class number $h$, a question that became known as the Gauss class number problem. The former conjecture of Gauss was proved by Heilbronn, and his class number problem for $h=1$ was solved by Heegner, Baker, and Stark. We now have a complete list of all imaginary quadratic fields with class number $h$ for all $h\leq 100$ thanks to the work of Watkins \cite{Wa}. 

Unlike imaginary quadratic fields, very little is known about real quadratic fields. In this case, Gauss conjectured that there are infinitely many real quadratic fields with class number $1$, a problem that is still open. The main difference with imaginary quadratic fields is the existence of non-trivial units in $\mathbb{Q}(\sqrt{d})$ if $d>0$, which heavily affect the size of the class number $h(d)$ in this case. Indeed, Dirichlet's class number formula states that for $d>0$ we have
\begin{equation}\label{Class1}
h(d)=\frac{\sqrt{d}}{\log\varepsilon_d}L(1,\chi_d),
\end{equation}
where $\chi_d=\left(\frac{d}{\cdot}\right)$ is the Kronecker symbol, and
$\varepsilon_d$ is the fundamental unit of the quadratic field  $\mathbb{Q}(\sqrt{d})$, defined as  
$\varepsilon_d= (a+b\sqrt{d})/2,$
where $a$ and $b$ are the smallest positive integer solutions to the Pell equations $a^2- b^2d=\pm 4$. 

Although it is a difficult problem to estimate the fundamental unit $\varepsilon_d$ in general, there exist several  families of real quadratic fields for which $\varepsilon_d$ is small in terms of $d$, and hence for which $h(d)$ is large. 
One important example is the family of real quadratic fields $\mathbb{Q}(\sqrt{d})$ corresponding to fundamental discriminants $d$ the form $4m^2+1$, where $m$ is a positive integer. This family was first studied by Chowla, who conjectured that for any positive integer $m>13$ such that $4m^2+1$ is squarefree, we have $h(4m^2+1)>1$ (see \cite{ChFr}). Another example is the family of fields $\mathbb{Q}(\sqrt{d})$ corresponding to fundamental discriminants $d$ of the form $m^2+4$, which was studied by Yokoi in \cite{Yo}. In particular, he conjectured that $h(m^2+4)>1$ for all $m>17$.  Both Chowla's and  Yokoi's conjectures were settled by Bir\'o in \cite{Bi1} and \cite{Bi2}.  There are further generalizations of Chowla's and Yokoi's families, commonly known as real quadratic fields of Richaud-Degert type. The class number problem for these fields was studied by several authors, notably by Mollin and Williams \cite{Mo}, \cite{MoWi}. 

Here and throughout we denote by $\mathcal{D}_{\textup{ch}}$ Chowla's family of fundamental discriminants, defined by
$$\mathcal{D}_{\textup{ch}}:= \{ d: d \text{ squarefree of the form } d=4m^2+1 \text{ for } m\geq 1\}.$$ 
We also let $\D=\{d\leq x: d\in \mathcal{D}_{\text{ch}}\}$. Then it follows from Lemma 1 of \cite{MoWe} that\begin{equation} \label{MontWein}
|\D|= \frac{\sqrt{x}}{2} \prod_{p>2}\left(1-\frac{c(p)}{p^2}\right) +O\left(x^{1/3}\log x\right),
\end{equation}
where
$c(p):= 1+ \left(\frac{-1}{p}\right)$. If $d\in \mathcal{D}_{\text{ch}}$ then the class number formula \eqref{Class1} becomes
\begin{equation}\label{Class2}
 h(d)=\frac{\sqrt{d}}{\log (\sqrt{d-1}+\sqrt{d})}L(1,\chi_d),
\end{equation}
since the fundamental unit is $\varepsilon_d=2m+\sqrt{d}$ if $d=4m^2+1$  is squarefree.
Therefore, assuming the generalized Riemann hypothesis GRH, we have 
\begin{equation}\label{BoundClassChowla}
\left(e^{-\gamma}\zeta(2)+o(1)\right)\frac{\sqrt{d}}{\log d \log\log d}
\leq
 h(d)
 \leq  (4e^{\gamma}+o(1))\frac{\sqrt{d}}{\log d}\log\log d,
\end{equation}
 for any $d\in \mathcal{D}_{\textup{ch}}$, where $\gamma$ is the Euler-Mascheroni constant. These bounds follow from the corresponding bounds for $L(1,\chi_d)$ obtained by Littlewood \cite{Li} under GRH. Note that the upper bound in \eqref{BoundClassChowla} holds for all real quadratic fields, since $\varepsilon_d\geq \sqrt{d}/2$ for all positive fundamental discriminants. 
 
Chowla's family $\mathcal{D}_{\textup{ch}}$ was used by Montgomery and Weinberger \cite{MoWe} to produce real quadratic fields with extremely large class numbers. 
 More precisely, they proved that there are at least $x^{3/8}$ discriminants $d\in \D$ such that  
$$
h(d)\gg  \frac{\sqrt{d}}{\log d}\log\log d.
$$
This result was recently refined by Lamzouri \cite{La3}, who showed that there are at least $x^{1/2-1/\log\log x}$ discriminants $d\in \D$ such that
\begin{equation}\label{Lamzouri15}
h(d)\geq (2e^{\gamma}+o(1))\frac{\sqrt{d}}{\log d}\log\log d.
\end{equation}
The lower bound \eqref{Lamzouri15} is believed to be best possible over all positive fundamental discriminants $d$, in view of the widely believed conjecture that $L(1,\chi_d)\leq (e^{\gamma}+o(1))\log\log|d|$ for all fundamental discriminants $d$.  Note that the true lower bound for $L(1,\chi_d)$ is believed to be $\big(e^{-\gamma}\zeta(2)+o(1)\big)/\log\log |d|$, which would imply a lower bound for $h(d)$ over $d\in\mathcal{D}_{\text{ch}}$ that is twice as large as the GRH lower bound in \eqref{BoundClassChowla}. One can refer to \cite{GrSo2} for a discussion and results related to these conjectures. 

In this paper, we shall investigate the distribution of $h(d)$ over fundamental discriminants $d$ in Chowla's family.  With minor modifications, one can obtain similar results for Yokoi's family of real quadratic fields. Here and throughout we let $\log_j$ be the $j$-fold iterated logarithm; that is, $\log_2=\log\log$, $\log_3=\log\log\log$ and so on. Our main result  shows that the tail of the distribution of large (and small) values of $h(d)$  over $d\in \mathcal{D}_{\text{ch}}$ is double exponentially decreasing. In particular, it implies \eqref{Lamzouri15}.
\begin{thm}\label{MainResult} Let $x$ be large, and $1\leq \tau\leq \log_2 x-3\log_3x$. The number of discriminants $d\in \D$ such that 
$$ h(d)\geq 2e^{\gamma}\frac{\sqrt{d}}{\log d} \cdot \tau,$$
equals 
\begin{equation}\label{TailDistributionClass} |\D| \cdot  \exp\left(-\frac{e^{\tau-C_0}}{\tau}\left(1+ O\left(\frac{1}{\tau}\right)\right)\right),
\end{equation}
where 
\begin{equation}\label{SpecialConstant}
C_0:= \int_0^1\frac{\tanh(t)}{t}dt + \int_1^{\infty}\frac{\tanh(t)-1}{t}dt=0.8187\cdots.
\end{equation}
 Moreover, the same estimate holds for the number of discriminants $d\in \D$ such that 
$$ h(d)\leq 2e^{-\gamma}\zeta(2)\frac{\sqrt{d}}{\log d}\cdot \frac{1}{\tau},$$
in the same range of $\tau$. 
\end{thm}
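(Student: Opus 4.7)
The plan is to reduce the theorem to a statement about the distribution of $L(1,\chi_d)$ over $d \in \D$ via the class number formula \eqref{Class2}, and then to analyze that distribution by comparing it with a probabilistic model through the method of high moments. Since for $d=4m^2+1$ one has
\[ \log\bigl(\sqrt{d-1}+\sqrt{d}\bigr)=\log\bigl(2m+\sqrt{4m^2+1}\bigr)=\tfrac12\log d+\log 2+O(1/d), \]
the formula \eqref{Class2} gives $h(d)=(2\sqrt{d}/\log d)(1+O(1/\log d))L(1,\chi_d)$, so the event $h(d)\geq 2e^{\gamma}\sqrt{d}\tau/\log d$ is equivalent (after absorbing the small error into the main term) to $L(1,\chi_d)\geq e^{\gamma}\tau$. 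The problem therefore reduces to showing that the number of $d\in\D$ with $L(1,\chi_d)\geq e^{\gamma}\tau$ matches \eqref{TailDistributionClass}.

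For this, I would construct a probabilistic model $\{\X(p)\}_p$ of independent random variables that mimic the averaged behavior of $\chi_d(p)=\bigl(\tfrac{4m^2+1}{p}\bigr)$ as $m$ varies. For $p\equiv 3\pmod 4$, the Weil bound shows the symbol is balanced $\pm 1$ over residues $m\pmod p$ up to $O(\sqrt{p})$, so take $\X(p)=\pm 1$ each with probability $1/2$. For $p\equiv 1\pmod 4$ the congruence $4m^2+1\equiv 0\pmod p$ has exactly two solutions, so take $\X(p)=0$ with probability $2/p$ and $\X(p)=\pm 1$ with probability $(1-2/p)/2$ each; the prime $2$ is handled analogously via $d \bmod 8$. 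Then $L(1,\X):=\prod_p(1-\X(p)/p)^{-1}$ is a random variable whose tail satisfies, via a saddle-point analysis of its Laplace transform,
\[ \pr\bigl(L(1,\X)\geq e^{\gamma}\tau\bigr)=\exp\Bigl(-\frac{e^{\tau-C_0}}{\tau}\bigl(1+O(1/\tau)\bigr)\Bigr), \]
where the constant $C_0$ in \eqref{SpecialConstant} emerges from the cumulant generating function $\sum_p\log\ex\bigl[e^{s\X(p)/p}\bigr]$, whose local factors look like $\log\cosh(s/p)$, producing the $\tanh$-integrand after differentiating at the saddle point.

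Transferring the probabilistic tail to $\D$ proceeds via moment matching. First, following Granville--Soundararajan, one shows that for all but $o(|\D|)$ discriminants $\log L(1,\chi_d)=\sum_{p\leq y}\chi_d(p)/p+O(1)$ with $y=(\log x)^A$, and similarly for the random model. Then one computes the joint moments
\[ \sum_{d\in\D}\Bigl(\sum_{p\leq y}\frac{\chi_d(p)}{p}\Bigr)^k, \]
which after expansion and use of multiplicativity of the Kronecker symbol reduce to character sums
\[ S(n):=\sum_{\substack{m\geq 1\\ 4m^2+1\leq x\\ 4m^2+1\text{ squarefree}}}\Bigl(\frac{4m^2+1}{n}\Bigr) \]
for squarefree $n\leq y^k$. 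The crucial analytic ingredient is a Weil-type bound of the form $S(n)\ll_{\varepsilon}n^{\varepsilon}\sqrt{x}\prod_{p\mid n}p^{-1/2}$ (or similar), obtained by reducing modulo each prime divisor of $n$ and invoking the Weil bound for the character sum attached to the polynomial $4X^2+1$; this extends the Montgomery--Weinberger estimate used in \cite{MoWe} and refined in \cite{La3}, and permits the arithmetic $k$-th moment over $\D$ to be matched to the probabilistic one up to negligible error.

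The main obstacle is the thinness of the family: because $|\D|\asymp\sqrt{x}$, the moment method only handles $n\leq x^{1/2-\varepsilon}$, which translates to moments of order $k\ll\log x/\log_2 x$, and this is precisely the constraint that fixes the admissible range $\tau\leq\log_2 x-3\log_3 x$ in the theorem. Given the moment matching, the sharp double-exponential tail follows from a Laplace-transform comparison between arithmetic and probabilistic moments at the appropriate saddle point. Finally, the lower tail $h(d)\leq 2e^{-\gamma}\zeta(2)\sqrt{d}/(\tau\log d)$ is handled by the same argument applied to $1/L(1,\chi_d)=\prod_p(1-\chi_d(p)/p)$, whose Dirichlet series has the same multiplicative structure and yields the symmetric bound with $1/\tau$ in place of $\tau$, producing the constant $e^{-\gamma}\zeta(2)$ from $\ex[1/L(1,\X)]$.
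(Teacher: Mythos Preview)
Your overall architecture---reduce via the class number formula to the distribution of $L(1,\chi_d)$, compare to a random Euler product, extract the tail by saddle-point, and transfer via moment matching---is exactly the paper's strategy. But two concrete points in your execution are off, and one of them is a real gap.

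First, your random model is not the right one. In Chowla's family the values $\chi_d(p)=\bigl(\tfrac{4m^2+1}{p}\bigr)$ are \emph{not} equidistributed among $\pm1$: the elementary (Jacobsthal) identity $\sum_{m\bmod p}\bigl(\tfrac{4m^2+1}{p}\bigr)=-1$ forces $\ex[\X(p)]=-1/p$ up to lower order, for \emph{every} odd $p$, including $p\equiv3\pmod4$. The paper's model (equations \eqref{ALPHAP}--\eqref{Jacobsthal}) builds this bias in, with $\alpha_p\ne\beta_p$. This asymmetry is harmless for the saddle-point computation (it only perturbs $\log E_p(r)$ by $O(r/p^2)$, so $C_0$ is unaffected), but it is essential for the moment comparison.

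Second---and this is the actual gap---your character-sum input is misconceived. You claim a Weil-type cancellation bound $S(n)\ll n^{\varepsilon}\sqrt{x}\prod_{p\mid n}p^{-1/2}$ and say this lets the arithmetic moments match the probabilistic ones. But $S(n)$ is \emph{not} small: the paper's Proposition~\ref{AsympChar} shows
\[
\frac{1}{|\D|}\sum_{d\in\D}\chi_d(m)=\ex(\X(m))+O\bigl(m^{2/3}x^{-1/6}\log x\bigr),
\]
with $\ex(\X(m))$ explicitly nonzero for every $m$ (of size $\asymp 1/m_0$ with $m_0$ the odd squarefree part, by Lemma~\ref{ExpectationRandom}). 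The moment matching works not because the off-diagonal sums vanish, but because their main terms coincide with the expectations in the (correctly biased) random model. If you use your symmetric model and a pure cancellation bound, the arithmetic and probabilistic moments will not agree. The fix is exactly what the paper does: evaluate the complete sums $\frac{1}{m}\sum_{n\bmod m}\bigl(\tfrac{4n^2+1}{m}\bigr)$ exactly (Lemma~\ref{ExactChar}) and identify the answer with $\ex(\X(m))$.

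As a secondary difference in technique, the paper does not compute integer moments of the truncated sum $\sum_{p\le y}\chi_d(p)/p$. Instead it computes the complex moments $\frac{1}{|\D|}\sum_d L(1,\chi_d)^z$ directly (Theorem~\ref{ComplexMoments}), via a zero-density estimate and a short smoothed Dirichlet-series approximation to $L(1,\chi)^z$ (Proposition~\ref{ShortApproxL}), and then recovers the distribution function through a smoothed Perron inversion (Lemma~\ref{SmoothPerron}). This packaging avoids the combinatorics of expanding high powers and gives the uniformity in $z$ needed to reach the stated range of $\tau$.
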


In view of the class number formula \eqref{Class2}, the distribution of $h(d)$ is completely determined by that of $L(1,\chi_d)$ over $d\in \mathcal{D}_{\text{ch}}$.   Our strategy is to compare the distribution of $L(1,\chi_d)$ over $d\in \mathcal{D}_{\text{ch}}$ to that of a random Euler product 
$L(1,\X)=\prod_{p}\left(1-\X(p)/p\right)^{-1},$
where the $\X(p)$'s are independent random variables  taking the values $0, \pm 1$ with suitable probabilities that are described below. One can think of the random variable $\X(p)$ as a model for the value of $\chi_d(p)$ as $d$ varies in  $\mathcal{D}_{\text{ch}}$. 

One should compare our results with those of Granville and Soundararajan \cite{GrSo2} concerning the distribution of values of $L(1,\chi_d)$ over all fundamental discriminants $d$ such that $|d|\leq x$ (their results also hold if one restricts attention to either positive or negative discriminants). Although the probabilistic random model for this family is different from that of Chowla's family (for arithmetic reasons that are explained below), the tail of the distribution of these values satisfy a similar estimate to \eqref{TailDistributionClass}. In particular, one can deduce from their results that the proportion of imaginary quadratic fields $\mathbb{Q}(\sqrt{-d})$ with $d\leq x$ such that $h(-d)\geq \frac{\sqrt{d}}{\pi} e^{\gamma}\tau$ (or $h(-d)\leq \frac{\zeta(2)\sqrt{d}}{\pi} (e^{\gamma}\tau)^{-1}$) equals $\exp\left(-\frac{e^{\tau-C_0}}{\tau}\left(1+ O\left(\frac{1}{\tau}\right)\right)\right)$,  in asymptotically the same range for $\tau$. This shows a strong similarity between the distribution of class numbers in Chowla's family and that of class numbers of imaginary quadratic fields.

Let $\{\X(p)\}_{p}$ be a sequence of independent random variables taking the value $1$ with probability $\alpha_p$, $-1$ with probability $\beta_p$, and  $0$ with probability $\gamma_p$, where $\alpha_2 = \beta_2 = 1/2$, $\gamma_2=0$, and for odd $p$ we have
\begin{equation}\label{ALPHAP}
\alpha_p= \frac{1}{2} \left(1-\frac{c(p)+1}{p}\right)\left(1-\frac{c(p)}{p^2}\right)^{-1},
\end{equation}
\begin{equation}\label{BETAP}
\beta_p= \frac{1}{2} \left(1-\frac{c(p)-1}{p}\right)\left(1-\frac{c(p)}{p^2}\right)^{-1},
\end{equation}
and
$$ \gamma_p=\frac{p c(p)-c(p)}{p^2-c(p)}=1-\left(1-\frac{c(p)}{p}\right)\left(1-\frac{c(p)}{p^2}\right)^{-1}.$$
The argument for choosing these probabilities is as follows: Let $p$ be an odd prime. If $d=4m^2+1$ is squarefree then $d$ lies in one of $p^2-c(p)$ residue classes modulo $p^2$, since $p^2\nmid 4m^2+1$.  Among these, $\chi_d(p)=0$ for exactly $pc(p)-c(p)$ of them, which justifies the choice of $\gamma_p$. Furthermore, since $d$  belongs to one of $p^2-c(p)$ residue classes modulo $p^2$, then we must have 
\begin{equation}\label{Jacobsthal}
\alpha_p- \beta_p=\ex(\X(p))= \left(\frac{p^2-c(p)}{p^2}\right)^{-1}\left(\frac1p\sum_{m=0}^{p-1}\left(\frac{4m^2+1}{p}\right)\right)=-\frac{1}{p}\left(1-\frac{c(p)}{p^2}\right)^{-1}
\end{equation}
which follows from the Jacobsthal sum identity $\sum_{m=0}^{p-1}\left(\frac{4m^2+1}{p}\right)=-1$ (see for example \cite{St}). Combining \eqref{Jacobsthal} with the fact that $\alpha_p+\beta_p=1-\gamma_p$ yield \eqref{ALPHAP} and \eqref{BETAP}.

\noindent For the prime $2$, note that $4m^2+1$ lies in one of the residue classes $1, 5 \pmod 8$, and the values $\pm1$ occur equally often. 

We extend the $\X(p)$'s multiplicatively to all positive integers by setting $\X(1)=1$ and 
$ \X(n):= \X(p_1)^{a_1}\cdots \X(p_k)^{a_k} $ if $n= p_1^{a_1}\cdots p_k^{a_k}.$
We now define
$$L(1,\X):=\sum_{n=1}^{\infty}\frac{\X(n)}{n}= \prod_{p}\left(1-\frac{\X(p)}{p}\right)^{-1},$$
where both the series and the product are almost surely convergent by Lemma \ref{ExpectationRandom} below together with Kolmogorov's three-series Theorem. 
For $\tau>0$, define 
$$ \Phi_{\X}(\tau):= \pr\big(L(1,\X)>e^{\gamma}\tau\big) \text{ and } \Psi_{\X}(\tau):= \pr\left(L(1,\X)<\frac{\zeta(2)}{e^{\gamma}\tau}\right).
$$
We prove that the distribution of $L(1,\chi_d)$ over $d\in \mathcal{D}_{\text{ch}}$ is very well approximated by that of $L(1,\X)$ uniformly in almost all of the viable range. 
\begin{thm}\label{TheoremDistribution}
Let $x$ be large. Uniformly in the range $1\leq \tau\leq \log_2 x-2\log_3x-\log_4 x$, we have 
$$\frac{1}{|\D|}\big|\{d\in \D: L(1,\chi_d) >e^{\gamma}\tau \}\big|= \Phi_{\X}(\tau)\left(1+O\left(\frac{e^{\tau}(\log_2 x)^2\log _3 x}{\log x}\right)\right),$$
and 
$$ \frac{1}{|\D|}\left|\left\{d\in \D: L(1,\chi_d) <\frac{\zeta(2)}{e^{\gamma}\tau} \right\}\right|= \Psi_{\X}(\tau)\left(1+O\left(\frac{e^{\tau}(\log_2 x)^2\log _3 x}{\log x}\right)\right).$$
\end{thm}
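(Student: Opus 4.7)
The plan is to adapt the moment-matching strategy of Granville--Soundararajan, which compares the distribution of an $L$-value to that of a random Euler product. Because Chowla's family is sparse --- $|\D|\asymp\sqrt{x}$, parametrised by $m\le \sqrt{x}/2$ --- the character sum $\sum_{d\in\D}\chi_d(n)$ can only be evaluated asymptotically when $n$ is of size at most roughly $x^{1/4}$, and this arithmetic restriction will ultimately determine the admissible range of $\tau$.

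The first step is to truncate. Fix a parameter $y=y(x,\tau)$, typically a slowly growing function of $\log x$, and set
$$
L(1,\chi_d;y) := \prod_{p\le y}\Bigl(1-\frac{\chi_d(p)}{p}\Bigr)^{-1}, \qquad L(1,\X;y) := \prod_{p\le y}\Bigl(1-\frac{\X(p)}{p}\Bigr)^{-1}.
$$
A mean-square bound for the tail $\sum_{p>y}\chi_d(p)/p$ averaged over $d\in\D$ will show that $L(1,\chi_d)=L(1,\chi_d;y)(1+\varepsilon_d)$, with $|\varepsilon_d|$ small outside an exceptional set whose size is dominated by the right-hand side of the theorem. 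The analogous truncation for $L(1,\X)$ follows from Kolmogorov's three-series theorem.

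The core is a moment comparison. Let $\nu(n):=\ex(\X(n))$, an explicit multiplicative function determined by \eqref{ALPHAP}--\eqref{BETAP}. Using the Jacobsthal-type identity \eqref{Jacobsthal} together with a decomposition of $\{4m^2+1 : m\le \sqrt{x}/2\}$ into residue classes modulo $n^2$ (and an inclusion--exclusion argument to enforce squarefreeness), one proves that for squarefree $n\le x^{1/4-\varepsilon}$,
$$
\frac{1}{|\D|}\sum_{d\in\D} \chi_d(n) = \nu(n) + O\Bigl(\frac{n^{2+\varepsilon}}{\sqrt{x}}\Bigr).
$$
Multinomial expansion and multiplicativity then yield, for every positive integer $k$ with $y^k\le x^{1/4-\varepsilon}$,
$$
\frac{1}{|\D|} \sum_{d\in \D}\Bigl(\sum_{p\le y}\frac{\chi_d(p)}{p}\Bigr)^{\!k} = \ex\Bigl(\sum_{p\le y}\frac{\X(p)}{p}\Bigr)^{\!k} + \text{(negligible)}.
$$

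The final step converts moment agreement into tail-probability agreement. Writing the indicator of $\{\log L(1,\chi_d;y)>\gamma+\log\tau\}$ via an inverse Laplace transform and pushing the contour to the saddle point $s_0\asymp e^{\tau}/\tau$ produces the hyperbolic-tangent integrals that give rise to the constant $C_0$ of \eqref{SpecialConstant}; an identical calculation on the random side produces $\Phi_{\X}(\tau)$ and $\Psi_{\X}(\tau)$, so the moment equality from the previous step propagates directly into equality of tails up to the stated error. The main obstacle is the three-way balance between (i) the truncation $y$ being large enough to control $\log L(1,\chi_d)/L(1,\chi_d;y)$ to within $o(1/\tau)$, (ii) the moment order $k\sim \log x/(4\log y)$ being large enough to resolve a saddle point at scale $s_0\sim e^{\tau}/\tau$, and (iii) the exceptional-set contribution from Step 1 staying below $\exp(-e^{\tau-C_0}/\tau)$. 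Optimising these constraints produces the relative error $e^{\tau}(\log_2 x)^2\log_3 x/\log x$ and, via the requirement that this error be $o(1)$, the stated range $\tau\le \log_2 x-2\log_3 x-\log_4 x$.
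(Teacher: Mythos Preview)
Your overall architecture is correct---match moments of $L(1,\chi_d)$ with those of the random model, then invert via a saddle-point/Perron-type argument at $\kappa\asymp e^{\tau}$---and this is indeed what the paper does. But two of your technical choices differ substantially from the paper's, and one of them hides a real difficulty.

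\textbf{Comparison.} The paper never truncates the Euler product. Instead it computes the \emph{complex} moments $\frac{1}{|\D|}\sum_{d}^{\star} L(1,\chi_d)^{z}$ directly (Theorem~\ref{ComplexMoments}), uniformly for $|z|\le B\log x/(\log_2 x\,\log_3 x)$. The key device is Proposition~\ref{ShortApproxL}: if $L(s,\chi_d)$ has a modest zero-free rectangle to the left of $\re s=1$, then $L(1,\chi_d)^{z}$ is well approximated by the smoothed Dirichlet series $\sum_n d_z(n)\chi_d(n)e^{-n/y}/n$ with $y=x^{1/6}$. Heath-Brown's zero-density estimate shows all but $O(x^{1/3})$ discriminants in $\D$ enjoy such a zero-free region, and the remaining character-sum average is handled by Proposition~\ref{AsympChar}. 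The passage from complex moments to the distribution is then a clean smoothed Perron argument (Lemma~\ref{SmoothPerron}) on the line $\re s=\kappa$, combined with the perturbation estimate~\eqref{Perturbations}. Your route---truncate to $L(1,\chi_d;y)$ with $y$ a power of $\log x$, compute \emph{integer} moments of $\sum_{p\le y}\chi_d(p)/p$, then run an inverse Laplace transform---is in principle viable and closer in spirit to some earlier treatments, but it shifts the burden onto controlling the tail $\sum_{p>y}\chi_d(p)/p$ and onto interpolating from integer moments to the complex line, both of which the paper's approach simply bypasses.

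\textbf{The gap.} The step ``a mean-square bound for the tail $\sum_{p>y}\chi_d(p)/p$ averaged over $d\in\D$'' is where your sketch is incomplete. Expanding the square and using the character-sum asymptotic of Proposition~\ref{AsympChar} only controls the contribution of prime pairs $p_1,p_2$ with $p_1p_2\ll x^{1/4}$; beyond that range the error term $O((p_1p_2)^{2/3}x^{-1/6})$ swamps the signal. There is no off-the-shelf large sieve for the sparse sequence $\{4m^2+1\}$ that would handle the range $p>x^{1/8}$, and a pointwise bound on the tail for individual $d$ is exactly a zero-free-region statement. In other words, to make your truncation work you would end up importing precisely the zero-density and zero-free-region inputs that the paper uses---at which point it is both cleaner and stronger to compute the full complex moment of $L(1,\chi_d)^{z}$ as the paper does, rather than to truncate first. (A secondary issue: your character-sum error $O(n^{2+\varepsilon}/\sqrt{x})$ is not small for $n$ near $x^{1/4}$; the paper obtains $O(n^{2/3}x^{-1/6}\log x)$, which is what actually gives the range $n\ll x^{1/4}$.)
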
 
In order to deduce Theorem \ref{MainResult}, we need to study the asymptotic behaviour of the distribution functions $\Phi_{\X}(\tau)$ and $\Psi_{\X}(\tau)$  in terms of $\tau$ when $\tau$ is large. We accomplish this by a careful saddle point analysis. 
\begin{thm}\label{ExponentialDecay}
For large $\tau$ we have  
\begin{equation}\label{AsympLargeDeviations}
\Phi_{\X}(\tau)=\exp\left(-\frac{e^{\tau-C_0}}{\tau}\left(1+ O\left(\frac{1}{\tau}\right)\right)\right),
\end{equation}
where $C_0$ is defined in \eqref{SpecialConstant}. 
The same estimate also holds for $\Psi_{\X}(\tau)$. Moreover, if $0\leq \lambda\leq e^{-\tau}$, then we have 
\begin{equation}\label{Perturbations}
\Phi_{\X}\left(e^{-\lambda} \tau\right)=\Phi_{\X}(\tau) \big(1+O\left(\lambda e^{\tau}\right)\big), \textup{ and } \Psi_{\X}\left(e^{-\lambda} \tau\right)=\Psi_{\X}(\tau) \big(1+O\left(\lambda e^{\tau}\right)\big).
\end{equation}

\end{thm}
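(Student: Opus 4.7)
The plan is to prove Theorem \ref{ExponentialDecay} via a saddle-point analysis of the Laplace transform
\[
M_{\X}(s) := \mathbb{E}\bigl[L(1,\X)^s\bigr] = \prod_p \mathbb{E}\bigl[(1-\X(p)/p)^{-s}\bigr],
\]
which factors as an Euler product by independence of the $\X(p)$'s. Writing $V(s) := \log M_{\X}(s)$, the strategy has three steps: (i) derive a sharp asymptotic expansion of $V(s)$ as $s \to \infty$; (ii) locate the saddle point $s^* = s^*(\tau)$ satisfying $V'(s^*) = \gamma + \log \tau$; and (iii) recover $\Phi_{\X}(\tau)$ from the Mellin--Laplace inversion
\[
\Phi_{\X}(\tau) = \frac{1}{2\pi i}\int_{(c)} e^{V(s) - s(\gamma + \log \tau)}\,\frac{ds}{s}
\]
via the standard Gaussian saddle-point method.

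The heart of the argument is the expansion of $V(s)$ to precision $O(s/\log^2 s)$. Setting $a_p := \operatorname{arctanh}(1/p)$ and using the identities $\alpha_p + \beta_p = 1 - \gamma_p$ and $\alpha_p - \beta_p = -(1/p)(1-c(p)/p^2)^{-1}$, the identity $(1\mp 1/p)^{\pm s} = (1-1/p^2)^{s/2} e^{\pm s a_p}$ rewrites each local factor as
\[
\mathbb{E}\bigl[(1-\X(p)/p)^{-s}\bigr] = (1-1/p^2)^{-s/2}\!\left[\cosh(sa_p) - \frac{\sinh(sa_p)}{p} + O(1/p^2)\right] + \gamma_p,
\]
so that
\[
\log \mathbb{E}\bigl[(1-\X(p)/p)^{-s}\bigr] = -\tfrac{s}{2}\log(1-1/p^2) + \log\cosh(sa_p) - \frac{\tanh(sa_p)}{p} + O(1/p^2).
\]
The first term sums to $\tfrac{s}{2}\log\zeta(2)$; the sum $\sum_p \tanh(sa_p)/p = \log\log s + M + O(1/\log s)$ is of size $\log\log s$ and thus absorbed in the final error. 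The dominant sum $\sum_p \log\cosh(sa_p)$ is evaluated by splitting at $p = s$, using $\log\cosh(t) = t - \log 2 + \log(1+e^{-2t})$ for $t \geq 1$ together with Mertens' theorem in its sharp form $\sum_{p \leq s}[-\log(1-1/p)] = \log\log s + \gamma + O(e^{-c\sqrt{\log s}})$. The arithmetic identity $\log\zeta(2) + M + \sum_p[\log(1+1/p) - 1/p] = \gamma$ ensures that all linear-in-$s$ contributions assemble into exactly $\gamma s$. The $O(s/\log s)$ correction is then extracted by converting the residual prime sums (the boundary $-\pi(s)\log 2$, the tail $\sum_{p > s}\log\cosh(sa_p) \sim s/(2\log s)$, and $\sum_{p \leq s}\log(1+e^{-2sa_p})$) into integrals via the substitutions $u = s/p$ (with prime density $dp/\log p$) and $y = \log(s/u)$; after a careful regularization, these pieces combine to give precisely the integrals $\int_0^1\tanh(u)/u\,du$ and $\int_1^\infty(\tanh(u)-1)/u\,du$, leading to
\[
V(s) = s\log\log s + \gamma\, s + C_0\cdot\frac{s}{\log s} + O\!\left(\frac{s}{\log^2 s}\right).
\]

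Differentiating yields $V'(s) = \log\log s + \gamma + C_0/\log s + O(1/\log^2 s)$, so solving $V'(s^*) = \gamma + \log\tau$ iteratively gives $\log s^* = \tau - C_0 + O(1/\tau)$. The identity $V(s) - sV'(s) = -s^2\bigl(V(s)/s\bigr)'$ then produces
\[
V(s^*) - s^*(\gamma + \log\tau) = -\frac{s^*}{\log s^*}\bigl(1 + O(1/\log s^*)\bigr) = -\frac{e^{\tau - C_0}}{\tau}\bigl(1 + O(1/\tau)\bigr),
\]
establishing \eqref{AsympLargeDeviations}. The Gaussian saddle-point factor $1/(s^*\sqrt{2\pi V''(s^*)})$ has logarithm $O(\tau)$, absorbed in the final error $O(e^{\tau - C_0}/\tau^2)$. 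The analysis of $\Psi_{\X}(\tau)$ is parallel, applied to $\mathbb{E}[L(1,\X)^{-s}]$: the corresponding $\tilde V$ satisfies $\tilde V'(s) = V'(s) - \log\zeta(2) + O(\log\log s/s)$, and the saddle-point equation becomes $\tilde V'(\tilde s^*) = \gamma + \log\tau - \log\zeta(2)$; the $-\log\zeta(2)$ shifts cancel and yield $\log \tilde s^* = \tau - C_0 + O(1/\tau)$, so $\Psi_{\X}(\tau)$ enjoys the same asymptotic. Finally, \eqref{Perturbations} follows from $\tfrac{d}{d\tau}\log\Phi_{\X}(\tau) = -e^{\tau - C_0}/\tau\cdot(1 + O(1/\tau))$, whence $\log\Phi_{\X}(e^{-\lambda}\tau) - \log\Phi_{\X}(\tau) = O(\lambda e^{\tau})$ for $0 \leq \lambda \leq e^{-\tau}$. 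The main obstacle throughout is the sharp computation of $V(s)$: verifying the non-trivial cancellation that collapses the linear-in-$s$ coefficient to $\gamma s$, and identifying the correct integral representation of $C_0$ in \eqref{SpecialConstant}; both rely crucially on the Mertens-type identity above and a careful reduction of the residual prime sums through the change of variables $y = \log(s/u)$.
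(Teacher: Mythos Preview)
Your saddle-point strategy for \eqref{AsympLargeDeviations} is essentially the paper's: expand the cumulant generating function $V(s)=\Lo(s)$ and $V'(s)$ to precision $O(s/\log^2 s)$ (this is Proposition~\ref{MomentRand}), locate the saddle $\kappa$ via $V'(\kappa)=\gamma+\log\tau$, and read off the exponent $V(\kappa)-\kappa(\gamma+\log\tau)=-\kappa/\log\kappa\cdot(1+O(1/\log\kappa))$. One small slip: your stated expansion $V(s)=s\log\log s+\gamma s+C_0\,s/\log s+O(s/\log^2 s)$ is inconsistent with your $V'(s)=\log\log s+\gamma+C_0/\log s+O(1/\log^2 s)$, since differentiating $s\log\log s$ already produces a $1/\log s$ term; the correct coefficient of $s/\log s$ in $V$ is $C_0-1$ (compare \eqref{MomentRand1}). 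This does not affect your computation of $V(s^*)-s^*V'(s^*)$ via $-s^2(V/s)'$, which only sees the $s\log\log s$ term at leading order.

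There is, however, a genuine gap in your argument for \eqref{Perturbations}. You claim it follows from $\tfrac{d}{d\tau}\log\Phi_{\X}(\tau)=-e^{\tau-C_0}/\tau\cdot(1+O(1/\tau))$, but this derivative estimate is not a consequence of \eqref{AsympLargeDeviations}: one cannot differentiate an asymptotic $\log\Phi_{\X}(\tau)=-e^{\tau-C_0}/\tau+O(e^{\tau}/\tau^2)$ without controlling the derivative of the error, and simply subtracting the two $O(e^{\tau}/\tau^2)$ terms at $\tau$ and at $e^{-\lambda}\tau$ gives only $O(e^{\tau}/\tau^2)$, not the required $O(\lambda e^{\tau})$ when $\lambda\le e^{-\tau}$. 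The paper handles this by a separate direct estimate (equation \eqref{SmallShift} in Theorem~\ref{SaddlePoint}): one bounds
\[
\Phi_{\X}(e^{-\lambda}\tau)-\Phi_{\X}(\tau)\le\frac{1}{2\pi i}\int_{\kappa-i\infty}^{\kappa+i\infty}\ex\!\left(L(1,\X)^s\right)(e^{\gamma}\tau)^{-s}\,\frac{(e^{\lambda s}-1)(e^{\lambda s}-e^{-\lambda s})}{\lambda s^2}\,ds
\]
via the smoothed Perron inequality (Lemma~\ref{SmoothPerron}), and then uses the decay of $|\ex(L(1,\X)^{\kappa+it})|/\ex(L(1,\X)^{\kappa})$ in $t$ (Lemma~\ref{DecayMoments}) to show the right-hand side is $\ll\lambda\kappa\,\Phi_{\X}(\tau)$. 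Since $\kappa\asymp e^{\tau}$, this gives \eqref{Perturbations}. You should either supply this direct integral bound, or else justify the differentiability of the saddle-point error term --- which amounts to essentially the same work.
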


Our proof of Theorem \ref{TheoremDistribution} relies on computing complex moments of $L(1,\chi_d)$ over $d\in \mathcal{D}_{\text{ch}}$. To this end, we show that the average of $L(1,\chi_d)^z$ over Chowla's family is asymptotically equal to the corresponding moments of the probabilistic random model $L(1,\X)$ uniformly in a wide range of the complex variable $z$. 
\begin{thm}\label{ComplexMoments}
Let $x$ be large. There exists a positive constant $B$ such that uniformly for all complex numbers $z$ with $|z|\leq B\log x/(\log_2x\log_3 x)$ we have
$$\frac{1}{|\D|}\sumst_{d\in \D} L(1,\chi_d)^z= \ex\left(L(1,\X)^z\right)+
O\left(\exp\left(-\frac{\log x}{20\log_2x}\right)\right),
$$
where $\sumst$ indicates that the sum is over non-exceptional discriminants $d$.
\end{thm}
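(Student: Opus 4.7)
The overall strategy is to reduce both sides to a common Dirichlet series by exploiting a short-Euler-product approximation of $L(1,\chi_d)$ valid for most $d$. Following the now-standard framework of Granville--Soundararajan, I would first set aside an exceptional subset of $\D$ of size $x^{o(1)}$ (those $d$ for which $\sum_{p\le y}\chi_d(p)/p$ is very negative for the cutoff $y$ chosen below), and for the remaining non-exceptional $d$ exploit the bound
$$\log L(1,\chi_d)=\sum_{p\le y}\log\bigl(1-\chi_d(p)/p\bigr)^{-1}+O(1/\log y).$$
Raising to the $z$-th power gives $L(1,\chi_d)^z=L(1,\chi_d;y)^z\bigl(1+O(|z|/\log y)\bigr)$, where $L(1,\chi_d;y)$ is the truncated Euler product; with $y=(\log x)^A$ and $A$ large enough, this multiplicative loss is absorbed into the final error term.

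The core of the argument is then to expand $L(1,\chi_d;y)^z=\sum_{n\in\mathcal{S}(y)}d_z(n)\chi_d(n)/n$ over $y$-smooth integers (with $d_z$ the generalized divisor function), truncate at $n\le N:=\exp(\log x/(10\log_2 x))$, and average term by term. Everything reduces to the character-sum estimate
$$\frac{1}{|\D|}\sumst_{d\in\D}\chi_d(n)=\ex(\X(n))+O\bigl(n^{1/2+\varepsilon}x^{-1/2}\bigr)$$
for squarefree $n$ coprime to $2$. By multiplicativity of both sides this reduces to the prime case, handled via the Jacobsthal identity recalled in \eqref{Jacobsthal}; the prefactor $(1-c(p)/p^2)^{-1}$ in \eqref{ALPHAP}--\eqref{BETAP} arises from the conditional count of squarefree $d=4m^2+1$ in residue classes modulo $p^2$, extracted by a M\"obius convolution, while incomplete sums in $m$ are controlled by P\'olya--Vinogradov (or Weil for larger moduli). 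Matching the resulting expansion $\sum_{n\le N,\,n\in\mathcal{S}(y)}\ex(\X(n))d_z(n)/n$ against the analogous truncation of $\ex L(1,\X)^z$, justified via Lemma \ref{ExpectationRandom}, closes the argument.

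The main obstacle is the three-way calibration of $y$, $N$, and $|z|$. The Dirichlet-series tail $\sum_{n>N}|d_z(n)|/n$ and the character-sum error $N^{1/2+\varepsilon}x^{-1/2}$ weighted by $\sum_{n\le N}|d_z(n)|/n$ must both sit below $\exp(-\log x/(20\log_2 x))$; since moments of $d_z$ obey Rankin/Shiu-type bounds of order $(\log N)^{|z|}$, balancing these constraints forces exactly the range $|z|\le B\log x/(\log_2 x\log_3 x)$ in the statement. A secondary but essential issue is the exceptional set: a priori bounds on $L(1,\chi_d)^z$ for exceptional $d$ when $\re(z)$ is large are not strong enough to make their contribution to the full average negligible, which is why the theorem genuinely features $\sumst$ rather than a sum over all of $\D$.
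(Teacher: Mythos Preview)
Your quantitative calibration fails at the very first step. With $y=(\log x)^A$ you claim $\log L(1,\chi_d)=\log L(1,\chi_d;y)+O(1/\log y)$ for non-exceptional $d$, and then that $L(1,\chi_d)^z=L(1,\chi_d;y)^z\bigl(1+O(|z|/\log y)\bigr)$. But in the range $|z|\le B\log x/(\log_2 x\log_3 x)$ of the theorem you have
\[
\frac{|z|}{\log y}\asymp\frac{\log x}{(\log_2 x)^2\log_3 x}\to\infty,
\]
so this ``multiplicative loss'' is not a loss at all---it destroys the approximation entirely. Even replacing $O(1/\log y)$ by the best error the zero-free-rectangle method yields at $y=(\log x)^A$ (roughly $(\log x)^{1-A\epsilon/2}$), you would still need relative accuracy of order $\exp(-c\log x/\log_2 x)$, since $\ex\bigl(L(1,\X)^z\bigr)$ can be as large as $\exp\bigl(c|z|\log_2|z|\bigr)\asymp\exp(c\log x/\log_2 x)$ for real $z$ near the boundary; no fixed $A$ achieves that. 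The paper therefore does \emph{not} use a short Euler product. It takes $y=x^{1/6}$, a genuine power of $x$, and replaces the truncated product by the smoothed Dirichlet-series approximation of Proposition~\ref{ShortApproxL},
\[
L(1,\chi_d)^z=\sum_{n\ge 1}\frac{d_z(n)\chi_d(n)}{n}\,e^{-n/y}
+O\Bigl(\exp\bigl(-\tfrac{\log y}{2\log_2 x}\bigr)\Bigr),
\]
valid precisely when $|z|\le\log y/(4\log_2 x\,\log_3 x)$; this is where the range in the statement originates. The approximation comes from shifting the contour in $\frac{1}{2\pi i}\int L(1+s,\chi_d)^z\Gamma(s)y^s\,ds$ into a zero-free rectangle supplied by Heath-Brown's zero-density estimate. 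The set of $d$ excluded at this stage has size $\ll x^{1/3}$ and is controlled a posteriori via the crude bound \eqref{StandardBound}, not by any ``$\sum_{p\le y}\chi_d(p)/p$ very negative'' criterion.

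Two secondary points. First, your character-sum input has the right shape, but P\'olya--Vinogradov is the wrong tool: since $d=4m^2+1$, the value $\chi_d(n)=\bigl(\tfrac{4m^2+1}{n}\bigr)$ is periodic in $m$ modulo $n$, so the inner sum is complete and is evaluated exactly via CRT and the Jacobsthal identity (Proposition~\ref{AsympChar}); the error $O(m^{2/3}x^{-1/6}\log x)$ there comes from the squarefree sieve and a Pell-equation count, not from incomplete character sums. Second, the $\sumst$ in the theorem refers to the Siegel-exceptional discriminants of \eqref{Exceptional} (at most $O(\log x)$ of them), whereas the larger zero-density exceptional set used in the proof is added back at the end.
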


\begin{rem}\label{RemExceptional} The precise definition of an exceptional discriminant $d$ is stated in \eqref{Exceptional}. Note that if $d$ is exceptional, we could have $
L(1,\chi_d)$ as small as $d^{-\epsilon}$, so that when $|z|$ is large and $z<0$, the $z$-th moment of $L(1,\chi_d)$ would be heavily affected by the contribution of this particular discriminant. This justifies  the assumption that $d$ is non-exceptional in Theorem \ref{ComplexMoments}. Furthermore, note that if $\text{Re}(z)<0$ but $|\text{Re}(z)|$ is bounded, we no longer need the condition that $d$ is non-exceptional in Theorem \ref{ComplexMoments}, thanks to Siegel's bound $L(1,\chi_d)\gg_{\epsilon} d^{-\epsilon}$.
\end{rem}

As an application of our results, we investigate the number of discriminants in the family $\mathcal{D}_{\text{ch}}$ with class number $h$, which we denote by $\F$. The number of imaginary quadratic fields with class number $h$ was studied by Soundararajan in \cite{So}. In particular, he developed an asymptotic formula for its average value, a result whose error term was improved upon in \cite{La4}. A variant of Soundararajan's asymptotic formula (over odd $h$) was recently used by Holmin, Jones, Kurlberg, McLeman and Petersen \cite{HJKMP} to investigate statistics of class numbers of imaginary quadratic fields. 

By the class number formula \eqref{Class2}, one expects that the main contribution to the average of $\F$ over $h\leq H$ comes from discriminants $d$ of size $\ll H^2(\log H)^2$, since $L(1,\chi_d)$ is constant on average (by Theorem \ref{ComplexMoments}). Since there are $\asymp H\log H$ such discriminants in $\mathcal{D}_{\text{ch}}$, this heuristic argument suggests that the  average size of $\F$ should be around $\log h$. We prove that this is indeed the case. 

\begin{thm}
\label{thm:F-average}
As $H \rightarrow \infty$, we have
\[
	\sum_{h\leq H} \F = \frac{1}{2G} H \log H 
	+ O\left(H(\log_2 H)^2\log_3 H\right),
\]
where $$G=L(2,\chi_{-4})=1-\frac{1}{3^2}+\frac{1}{5^2}-\frac{1}{7^2}+\frac{1}{9^2}+\cdots=0.916...$$ is Catalan's constant, and $\chi_{-4}$ is the non-principal character modulo 4.
\end{thm}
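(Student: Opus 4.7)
The plan is to interpret the sum as $N(H) := \sum_{h \leq H} \F = |\{d \in \mathcal{D}_{\textup{ch}} : h(d) \leq H\}|$ and, via the class number formula \eqref{Class2}, recast the event $h(d) \leq H$ as the analytic inequality $L(1, \chi_d) \leq Y(d)$, where $Y(d) := H \log(\sqrt{d-1}+\sqrt{d})/\sqrt{d}$. First I would truncate: standard (effective) lower bounds of the shape $L(1, \chi_d) \gg 1/(\log d)^{O(1)}$---valid up to the harmless contribution of a handful of exceptional discriminants that can be absorbed into the error---force any $d$ contributing to $N(H)$ to satisfy $d \leq X_0 := H^2 (\log H)^{O(1)}$. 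The problem thereby reduces to counting $d \in \mathcal{D}_{\textup{ch}}(X_0)$ with $L(1, \chi_d) \leq Y(d)$.

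Parameterizing $d = 4m^2 + 1$ with $m \leq M_0 := \lfloor \sqrt{(X_0 - 1)/4} \rfloor$, I would partition $[1, M_0]$ into short intervals (of relative length $(\log H)^{-C}$) on which $Y(m) := H \log(2m + \sqrt{4m^2+1})/\sqrt{4m^2+1}$ is essentially constant. On each interval, Theorem \ref{TheoremDistribution}, applied as a difference of two counts over $\mathcal{D}_{\textup{ch}}(t)$ at the endpoints and using either $\Phi_\X$ or $\Psi_\X$ depending on whether $Y(m)$ is large or small, replaces the count of discriminants in the interval having $L(1, \chi_d) \leq Y(m)$ by $\kappa \cdot (\textup{length}) \cdot \Phi_{\textup{cdf}}(Y(m))$, where $\Phi_{\textup{cdf}}(y) := \pr(L(1, \X) \leq y)$ and $\kappa := \prod_{p > 2}(1 - c(p)/p^2)$. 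Summing over the intervals and recognizing a Riemann sum,
\[
N(H) = \kappa \int_1^{M_0} \Phi_{\textup{cdf}}(Y(m)) \, dm + E_{\textup{err}}.
\]

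The change of variables $y = Y(m)$---where in the dominant range $m \asymp H \log H$ one has $Y(m) \sim H \log(4m)/(2m)$ so that $|dm/dy| \sim H \log H/(2y^2)$---gives
\[
\int_1^{M_0} \Phi_{\textup{cdf}}(Y(m)) \, dm \sim \frac{H \log H}{2} \int_0^\infty \frac{\Phi_{\textup{cdf}}(y)}{y^2} \, dy = \frac{H \log H}{2} \, \ex\!\left[\frac{1}{L(1, \X)}\right]
\]
by Fubini. A short Euler-product computation using \eqref{ALPHAP}--\eqref{BETAP} together with the identity $c(p) - 1 = \chi_{-4}(p)$ (valid for odd $p$) then telescopes to
\[
\kappa \cdot \ex\!\left[\frac{1}{L(1, \X)}\right] = \prod_{p > 2}\left(1 - \frac{c(p) - 1}{p^2}\right) = \prod_{p}\left(1 - \frac{\chi_{-4}(p)}{p^2}\right) = \frac{1}{L(2, \chi_{-4})} = \frac{1}{G},
\]
producing the main term $H \log H/(2G)$.

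The main obstacle will be achieving the stated error uniformly. In the tail ranges $\tau \geq 1$ where Theorem \ref{TheoremDistribution} applies directly, its relative error $O((\log_2 X_0)^2 \log_3 X_0/\log X_0)$ multiplied by a main term of size $\asymp H \log H$ gives precisely $O(H (\log_2 H)^2 \log_3 H)$, as required (noting $\log X_0 \asymp \log H$). The delicate regime is the bulk $Y(m) \asymp 1$ (i.e., $\tau \asymp 1$), which lies outside the stated range of Theorem \ref{TheoremDistribution}; I would treat it by instead invoking Theorem \ref{ComplexMoments} together with a Beurling--Selberg type smoothing argument that converts matching complex moments into matching distribution functions on compact intervals of $y$, with the exceptional discriminants of Remark \ref{RemExceptional} absorbed into the error via a trivial bound.
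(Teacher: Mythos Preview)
Your route to the main term is sound and the identification of the constant is correct: the Euler-product manipulation $\kappa\,\ex[L(1,\X)^{-1}]=\prod_{p>2}(1-\chi_{-4}(p)/p^2)=1/G$ and the Fubini identity $\int_0^\infty \Phi_{\textup{cdf}}(y)\,y^{-2}\,dy=\ex[L(1,\X)^{-1}]$ both check out. The gap is in the error budget for the partition-and-difference step. A single application of Theorem~\ref{TheoremDistribution} at a point $x\asymp H^2(\log H)^2$ in the bulk carries an \emph{absolute} error of size
\[
|\mathcal{D}_{\textup{ch}}(x)|\cdot \Phi_{\X}(\tau)\,\frac{e^{\tau}(\log_2 x)^2\log_3 x}{\log x}\ \asymp\ H(\log_2 H)^2\log_3 H,
\]
which is already the full target error. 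When you difference at the two endpoints of each interval, these errors do not cancel (the thresholds $y_j$ change from interval to interval, so nothing telescopes), and summing over $K\asymp(\log H)^{C}$ intervals inflates the total to $\asymp K\cdot H(\log_2 H)^2\log_3 H$. Meanwhile the Riemann-sum error from freezing $Y(m)$ on each interval is $\asymp H\log H/K$. No choice of $K$ makes both $O(H(\log_2 H)^2\log_3 H)$; optimizing gives only $O\big(H\sqrt{\log H}\,(\log_2 H)\sqrt{\log_3 H}\big)$. Your proposed Beurling--Selberg fix addresses a different issue (the window $\zeta(2)e^{-\gamma}\le y\le e^{\gamma}$ not covered by Theorem~\ref{TheoremDistribution}); it does not cure the accumulation.

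The paper sidesteps this entirely by never partitioning in $d$. It writes $h(d)^{-s}=\ell(d)^{-s}L(1,\chi_d)^{-s}$ and applies one continuous partial summation in $d$: the smooth weight $\ell(d)^{-s}$ is integrated against the running sum $\sum_{d\le x}L(1,\chi_d)^{-s}$, for which Theorem~\ref{ComplexMoments} supplies a single uniform asymptotic. This yields a closed formula for $\sum_{d\le X}h(d)^{-s}$ that is then fed into one smoothed Perron integral (Lemma~\ref{SmoothPerron}); the outcome is the expectation $\ex\big[\int_1^X I_{c,\lambda,N}\big(H/(\ell(x)L(1,\X))\big)x^{-1/2}\,dx\big]$, which is exactly your $\kappa\int\Phi_{\textup{cdf}}(Y(m))\,dm$ in disguise. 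The point is that the error from the moment input is incurred once, inside an integration by parts, rather than $K$ times across a discrete partition. If you replace your interval scheme by this partial-summation-plus-moments device, your outline becomes the paper's proof.
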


This paper is organized as follows: In Section \ref{Sec2} we establish an asymptotic formula for the average value of $\chi_d(m)$ over $d$ in $\D$. In particular, we show that in a certain range of $m$ in terms of $x$, the average order of $\chi_d(m)$ equals $\ex(\X(m))$. This is used to compute  complex moments of $L(1,\chi_d)$ over $d\in \D$ and prove Theorem \ref{ComplexMoments} in Section \ref{Sec3}. In Section \ref{Sec4} we use the saddle-point method to study the distribution of the random Euler product $L(1, \X)$  and prove Theorem \ref{ExponentialDecay}. These results are then used to prove Theorems \ref{MainResult}  and \ref{TheoremDistribution}  in Section \ref{Sec5}. Finally, we apply our results to study $\F$ and prove Theorem \ref{thm:F-average} in Section \ref{Sec6}.

\section{An asymptotic formula for the character sum $\sum_{d\in \D} \chi_d(m)$}\label{Sec2}
In order to prove that the moments of $L(1,\chi_d)$ over $d\in \D$ are nearly equal to the corresponding moments of $L(1,\X)$, we first need to show that the average order of $\chi_d(m)$ equals $\ex(\X(m))$ when $m$ is small compared to $x$. 
\begin{pro} \label{AsympChar}
Let $m$ be a positive integer. Then we have
$$\frac{1}{|\D|}\sum_{d\in \D} \chi_d(m)= \ex(\X(m))+O\left(m^{2/3}x^{-1/6}\log x\right).
$$
\end{pro}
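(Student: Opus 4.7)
The plan is to expand the squarefree indicator via Möbius and split the resulting sum at a cutoff $y$. Setting $N=\lfloor\sqrt{(x-1)/4}\rfloor$ and using $\mu^2(k)=\sum_{a^2\mid k}\mu(a)$, we write
$$\sum_{d\in\D}\chi_d(m)=\sum_{a\geq 1}\mu(a)\,T_a,\qquad T_a:=\sum_{\substack{n\leq N\\ a^2\mid 4n^2+1}}\left(\frac{4n^2+1}{m}\right).$$
Only $a$ odd contribute (since $4n^2+1$ is odd), and $T_a=0$ whenever $\gcd(a,m)>1$: any common prime $p$ satisfies $p^2\mid a^2\mid 4n^2+1$, so $p\mid 4n^2+1$ and the Kronecker symbol $\left(\frac{4n^2+1}{m}\right)$ vanishes. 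We may therefore restrict to $\gcd(a,m)=1$, and will choose the truncation $y\asymp x^{1/3}/m^{1/3}$ at the end.

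For small $a\leq y$ with $\gcd(a,m)=1$, the Chinese Remainder Theorem modulo $\mathrm{lcm}(a^2,m)$ (or the appropriate joint modulus when $2\mid m$) decouples the two conditions: the divisibility $a^2\mid 4n^2+1$ selects $r_a=\prod_{p\mid a}c(p)$ residues modulo $a^2$, while $\left(\frac{4n^2+1}{m}\right)$ depends only on $n\bmod m$. Equidistribution of $n\leq N$ in these residue classes, with error $O(1)$ per class, yields
$$T_a=\frac{N\,r_a\,J(m)}{a^2 m}+O(r_a m),\qquad J(m):=\sum_{n\bmod m}\left(\frac{4n^2+1}{m}\right).$$
By CRT, $J$ is multiplicative in $m$, and the values $J(p^k)$ are easily computed from the Jacobsthal identity $\sum_{n\bmod p}\left(\frac{4n^2+1}{p}\right)=-1$ recorded in \eqref{Jacobsthal}; comparing with \eqref{ALPHAP}--\eqref{BETAP} shows $\ex(\X(m))=(J(m)/m)\prod_{p\mid m,\,p>2}(1-c(p)/p^2)^{-1}$. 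Summing $\mu(a)T_a$ over $a\leq y$, completing the Euler product over $p\nmid m$, and invoking \eqref{MontWein} for $|\D|$ identifies the leading contribution with $|\D|\,\ex(\X(m))$; the product-completion error is $O(N\log y/y)$ and the total equidistribution error is $\ll m\sum_{a\leq y}2^{\omega(a)}\ll my\log y$.

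The main obstacle is bounding the large-$a$ tail $\sum_{a>y}|T_a|\leq\sum_{a>y}\#\{n\leq N:a^2\mid 4n^2+1\}$; the naive pointwise bound $2^{\omega(a)}(N/a^2+1)$ summed over all $a$ up to $2N$ produces a useless $O(N\log N)$. We instead parametrise the contributing triples $(n,a,k)$ by the identity $4n^2+1=a^2k$; setting $u=2n$, this becomes the negative Pell equation
$$u^2-ka^2=-1.$$
For each non-square $k$, the solutions form a single orbit under the squared fundamental unit $\epsilon_k^2$ of $\mathbb{Z}[\sqrt k]$, and $\epsilon_k\geq 1+\sqrt 2$ uniformly for $k\geq 2$. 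Hence there are at most $O(\log N)$ solutions with $|u|\leq 2N$, uniformly in $k$. Since $a>y$ forces $k<4N^2/y^2$, we obtain the sharp tail bound $\sum_{a>y}|T_a|\ll N^2\log N/y^2$. Choosing $y=N^{2/3}/m^{1/3}$ balances the tail against the equidistribution error at $O(m^{2/3}N^{2/3}\log N)=O(m^{2/3}x^{1/3}\log x)$, and dividing by $|\D|\asymp\sqrt x$ yields the stated relative error $O(m^{2/3}x^{-1/6}\log x)$.
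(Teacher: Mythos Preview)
Your proof is correct and follows essentially the same route as the paper: M\"obius expansion of the squarefree indicator, truncation at a parameter $y\asymp (x/m)^{1/3}$, the Pell-equation count $u^2-ka^2=-1$ to bound the tail $a>y$, and CRT combined with the Jacobsthal identity for the small-$a$ contribution (the paper records the latter two computations as separate lemmas). The only cosmetic differences are notational (your $N,y,a$ are the paper's $y,T,r$), and your parenthetical about a special modulus when $2\mid m$ is unnecessary since $\gcd(a,m)=1$ with $a$ odd already gives $\mathrm{lcm}(a^2,m)=a^2m$.
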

To prove this result, we first need the following lemmas. Here and throughout we let $\omega(n)$ be the number of distinct prime factors of $n$.
\begin{lem}\label{ExpectationRandom} 
Let $m=2^\ell p_1^{a_1}\cdots p_k^{a_k}$ be the prime factorization of $m$, and let $m_0$ be the squarefree part of $p_1^{a_1} \cdots p_k^{a_k}$. Then we have
$$
\ex(\X(m))=
\begin{cases}
\displaystyle
	\frac{1}{m_0}(-1)^{\omega(m_0)}\prod_{\substack{1\leq j\leq k\\2\mid a_j}}
	\left(1-\frac{c(p_j)}{p_j}\right)\prod_{j=1}^k\left(1-\frac{c(p_j)}{p_j^2}\right)^{-1}
	& \text{ if } \ell \text{ is even},\\
	0 & \text{ if } \ell \text{ is odd}.
\end{cases}
$$
\end{lem}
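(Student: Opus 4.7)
\medskip

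\noindent\textbf{Proof plan for Lemma \ref{ExpectationRandom}.} The plan is to use independence of the $\X(p)$'s to factor the expectation, evaluate each local factor separately using the distribution of $\X(p)$, and then repackage the resulting product in terms of the squarefree part $m_0$.

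First, since the $\X(p)$'s are independent and $\X$ is extended multiplicatively, I would write
\[
\ex\bigl(\X(m)\bigr)=\ex\bigl(\X(2)^{\ell}\bigr)\prod_{j=1}^{k}\ex\bigl(\X(p_j)^{a_j}\bigr).
\]
For the prime $2$, the variable $\X(2)$ takes the values $\pm 1$ with probability $1/2$ each, so $\X(2)^{\ell}=1$ deterministically when $\ell$ is even, while $\ex(\X(2)^{\ell})=\tfrac12-\tfrac12=0$ when $\ell$ is odd. This already gives the second line of the lemma and reduces the first case to showing that
\[
\prod_{j=1}^{k}\ex\bigl(\X(p_j)^{a_j}\bigr)
=\frac{(-1)^{\omega(m_0)}}{m_0}\prod_{\substack{1\le j\le k\\ 2\mid a_j}}\Bigl(1-\frac{c(p_j)}{p_j}\Bigr)\prod_{j=1}^{k}\Bigl(1-\frac{c(p_j)}{p_j^2}\Bigr)^{-1}.
\]

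Next, for an odd prime $p$ and a positive integer $a$, the random variable $\X(p)^{a}$ takes the value $1$ with probability $\alpha_p$, the value $(-1)^{a}$ with probability $\beta_p$, and the value $0$ with probability $\gamma_p$, so $\ex(\X(p)^{a})=\alpha_p+(-1)^{a}\beta_p$. I would then split according to the parity of $a_j$:
\begin{itemize}
\item If $a_j$ is even, then $\ex(\X(p_j)^{a_j})=\alpha_{p_j}+\beta_{p_j}=1-\gamma_{p_j}=\bigl(1-c(p_j)/p_j\bigr)\bigl(1-c(p_j)/p_j^2\bigr)^{-1}$.
\item If $a_j$ is odd, then $\ex(\X(p_j)^{a_j})=\alpha_{p_j}-\beta_{p_j}$, and by the Jacobsthal identity recalled in \eqref{Jacobsthal} this equals $-\frac{1}{p_j}\bigl(1-c(p_j)/p_j^2\bigr)^{-1}$.
\end{itemize}

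Finally, I would assemble the product. Every $j\in\{1,\dots,k\}$ contributes one factor $(1-c(p_j)/p_j^2)^{-1}$, which together form the rightmost product in the lemma. The even-exponent $j$'s contribute the middle product $(1-c(p_j)/p_j)$. The odd-exponent $j$'s contribute $-1/p_j$; recognising that $m_0=\prod_{2\nmid a_j}p_j$ and $\omega(m_0)=\#\{j:a_j\ \text{odd}\}$, their combined contribution is exactly $(-1)^{\omega(m_0)}/m_0$. Collecting these gives the claimed formula.

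The argument is essentially a bookkeeping exercise once the local expectations are in hand; there is no serious obstacle. The only step worth care is correctly identifying which $j$'s contribute to $m_0$ (namely those with odd exponent in the prime factorization of the odd part of $m$), so that the count $\omega(m_0)$ matches the number of sign flips coming from the odd-exponent case.
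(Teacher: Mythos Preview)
Your proposal is correct and follows essentially the same approach as the paper: factor the expectation by independence, evaluate $\ex(\X(2)^\ell)$ and $\ex(\X(p_j)^{a_j})$ according to the parity of the exponent using the explicit values of $\alpha_p,\beta_p,\gamma_p$, and then reassemble. The paper's proof is slightly terser in the final assembly step, but the content is identical.
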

\begin{proof}
Using the independence of the $\X(p)$'s we obtain
\begin{equation}\label{Multiplicative}
\ex(\X(m))= \ex\left(\X(2)^\ell\right) \prod_{j=1}^k\ex\left(\X(p_j)^{a_j}\right).
\end{equation}
First, if $a_j$ is even then
$$ \ex\left(\X(p_j)^{a_j}\right)= \alpha_{p_j}+\beta_{p_j}=1-\gamma_{p_j}
=
\left(1-\frac{c(p_j)}{p_j}\right)\left(1-\frac{c(p_j)}{p_j^2}\right)^{-1}.
$$
On the other hand, if $a_j$ is odd then
$$\ex\left(\X(p_j)^{a_j}\right)=\alpha_{p_j}-\beta_{p_j}=-\frac{1}{p_j}\left(1-\frac{c(p_j)}{p_j^2}\right)^{-1}.
$$
Finally, note that $\ex\left(\X(2)^\ell\right)$ equals $1$ is $\ell$ is even, and $0$ otherwise. 
Inserting these estimates in \eqref{Multiplicative} completes the proof.
\end{proof}

\begin{lem}\label{ExactChar}
Let $m=2^\ell p_1^{a_1}\cdots p_k^{a_k}$ be the prime factorization of $m$, and let $m_0$ be the squarefree part of $p_1^{a_1}\cdots p_k^{a_k}$. Then we have
$$
\frac{1}{m}\sum_{n=1}^{m}\left(\frac{4n^{2}+1}{m}\right)=
\begin{cases}
\displaystyle
	\frac{1}{m_0}(-1)^{\omega(m_0)}\prod_{\substack{1\leq j\leq k\\2\mid a_j}}
	\left(1-\frac{c(p_j)}{p_j}\right)
	& \text{ if } \ell \text{ is even},\\
	0 & \text{ if } \ell \text{ is odd}.
\end{cases}
$$
\end{lem}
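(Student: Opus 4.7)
The plan is to exploit the multiplicativity of the Jacobi symbol together with the Chinese Remainder Theorem to reduce the lemma to the case of prime powers. Writing $m = m_1 m_2$ with $\gcd(m_1,m_2)=1$ and letting $n$ run through residues mod $m$ via the CRT bijection, the identity $\left(\frac{4n^2+1}{m}\right) = \left(\frac{4n^2+1}{m_1}\right)\left(\frac{4n^2+1}{m_2}\right)$ immediately gives multiplicativity in $m$ of the normalized sum $S(m):=\tfrac1m\sum_{n=1}^m \left(\frac{4n^2+1}{m}\right)$. It therefore suffices to evaluate $S(2^\ell)$ and $S(p^a)$ for odd primes $p$.

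For $p=2$, I would use $\left(\frac{a}{2^\ell}\right)=\left(\frac{a}{2}\right)^\ell$ together with the elementary observation that $4n^2+1\equiv 1\pmod 8$ when $n$ is even and $4n^2+1\equiv 5\pmod 8$ when $n$ is odd. Hence $\left(\frac{4n^2+1}{2}\right)$ takes the values $+1$ and $-1$ equally often on $\{1,\ldots,2^\ell\}$, giving $S(2^\ell)=1$ if $\ell$ is even and $S(2^\ell)=0$ if $\ell$ is odd. This already accounts for the dichotomy on the parity of $\ell$ in the statement.

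For odd $p$ with $a$ odd, the identity $\left(\frac{\cdot}{p^a}\right)=\left(\frac{\cdot}{p}\right)$ reduces the sum to $p^{a-1}\sum_{n=1}^{p}\left(\frac{4n^2+1}{p}\right)$, which by the Jacobsthal sum identity (already cited in \eqref{Jacobsthal}) equals $-p^{a-1}$; hence $S(p^a)=-1/p$. For odd $p$ with $a$ even, $\left(\frac{4n^2+1}{p^a}\right)$ equals $1$ whenever $p\nmid 4n^2+1$ and $0$ otherwise, so $S(p^a)$ is the density of $n\in\{1,\ldots,p^a\}$ with $p\nmid 4n^2+1$. There are exactly $c(p)=1+\left(\frac{-1}{p}\right)$ solutions modulo $p$ to $4n^2+1\equiv 0$, and since $f(n)=4n^2+1$ has derivative $8n\not\equiv 0\pmod p$ at any such solution, Hensel's lemma shows each lifts uniquely, giving $c(p)p^{a-1}$ values of $n\in\{1,\ldots,p^a\}$ where the symbol vanishes. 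Thus $S(p^a)=1-c(p)/p$.

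Combining, the product over prime powers yields $S(2^\ell)=0$ if $\ell$ is odd, killing the whole product, and otherwise
\[
S(m) \;=\; \prod_{2\mid a_j}\left(1-\frac{c(p_j)}{p_j}\right)\prod_{2\nmid a_j}\left(-\frac{1}{p_j}\right),
\]
which rearranges to the claimed formula upon noting that $\prod_{2\nmid a_j} p_j = m_0$ and that the number of factors in the second product is $\omega(m_0)$. No serious obstacle is anticipated; the one point requiring care is the explicit computation at $p=2$, since the Jacobi symbol with even lower argument is defined by the supplementary rule $\left(\frac{a}{2}\right)\in\{\pm 1\}$ depending on $a\bmod 8$, and one must check that the two residues $1,5\pmod 8$ appearing here indeed produce opposite signs.
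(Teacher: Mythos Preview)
Your argument is correct and matches the paper's proof essentially line for line: both factor the sum via CRT into prime-power contributions, handle odd exponents with the Jacobsthal identity, even exponents by counting zeros of $4n^2+1$ modulo $p$, and $p=2$ by direct inspection of residues mod $8$. (The invocation of Hensel's lemma in the even-exponent case is superfluous---since the symbol $\left(\frac{4n^2+1}{p^a}\right)$ vanishes precisely when $p\mid 4n^2+1$, you only need the count of solutions modulo $p$, not modulo $p^a$---but your final count $c(p)p^{a-1}$ is correct regardless.)
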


\begin{proof}  Let $g(n)=4n^{2}+1$. Observe that the sum $\sum_{n=1}^m\left(g(n)/m\right)$ is a complete character sum, and hence by multiplicativity and the Chinese remainder theorem, we have 
\[
\sum_{n=1}^{m}\left(\frac{g(n)}{m}\right)
=\sum_{n_0=1}^{2^\ell}\left(\frac{g(n_0)}{2}\right)^{\ell}
\prod_{j=1}^{k}\left(\sum_{n_{j}=1}^{p_{j}^{a_j}}\left(\frac{g(n_j)}{p_{j}}\right)^{a_j}\right).
\]
If $a_j$ is even then
\[
\sum_{n_j=1}^{p_{j}^{a_j}}\left(\frac{g(n_j)}{p_j}\right)^{a_j}=p_j^{a_j-1}(p_{j}-c(p_{j}))
=p^{a_j}\left( 1-\frac{c(p_j)}{p_j} \right),
\]
since there are exactly $c(p_j) p_j^{a_j-1}$ integers $n_j$ such that $1\leq n_j\leq p_j^{a_j}$ and $g(n_j)\equiv 0\pmod{p_j}$. 
On the other hand, if $a_j=2b_j+1$ is odd then
\[
\sum_{n_{j}=1}^{p_{j}^{a_{j}}}\left(\frac{g(n_{j})}{p_{j}}\right)^{a_{j}}=\sum_{n_{j}=1}^{p_{j}^{a_{j}-1}}\sum_{c=1}^{p_{j}}\left(\frac{g(n_{j}p_{j}+c)}{p_{j}}\right)^{2b_{j}+1}=p_{j}^{a_{j}-1}\sum_{c=1}^{p_{j}}\left(\frac{g(c)}{p_{j}}\right)= -p_{j}^{a_{j}-1},
\]
since $\sum_{m=0}^{p-1}\left(\frac{4m^2+1}{p}\right)=-1$.

Finally, note that $\left(\frac{g(b_0)}{2}\right)= \left(\frac{4b_0^2+1}{2}\right)$ equals $1$ if $b_0$ is even, and $-1$ otherwise. Hence, it follows that 
\[
\sum_{b_0=1}^{2^\ell}\left(\frac{g(b_0)}{2}\right)^{\ell}
=
\begin{cases}
	2^\ell, & \ell \text{ even}, \\
	0, & \ell \text{ odd}.
\end{cases}
\]
Combining the above estimates completes the proof.

\end{proof}

\begin{proof}[Proof of of Proposition \ref{AsympChar}]
To simplify our notation, we define $S(x)=\sum_{d\in \D} \chi_d(m)$, and put 
 $y= \sqrt{x-1}/2$. Then, using that $\mu^2(n)=\sum_{r^2\mid n} \mu(r)$ we obtain
\begin{align*}
S(x)&=\sum_{n\leq y}\left(\frac{4n^{2}+1}{m}\right)\mu^{2}(4n^{2}+1)
= \sum_{n\leq y}\left(\frac{4n^{2}+1}{m}\right)\sum_{r^2\mid 4n^2+1}\mu(r)\\
&= \sum_{\substack{r\leq \sqrt{x}\\
(r, 2m)=1}}\mu(r)\sum_{\substack{n\leq y\\
r^{2}\mid4n^{2}+1}}\left(\frac{4n^{2}+1}{m}\right).
\end{align*}
Let $2\leq T\leq y$ be a real parameter to be chosen later. We split the above sum over $r$ into two parts $r\leq T$ and $T< r\leq \sqrt{x}$. Writing $4n^2+1=r^2 s$, it follows that the contribution of the second part is 
$$ \ll \sum_{T< r\leq \sqrt{x}} \sum_{\substack{n\leq y\\
r^{2}\mid4n^{2}+1}} 1\ll \sum_{s\leq x/T^2}\sum_{\substack{n, r\\
(2n)^2-sr^2=-1}} 1.$$
From the theory of Pell's equation, the number of pairs $(u, v)$ for which $1\leq u\leq U$ and $u^2-sv^2=-1$, is $\ll \log U$ uniformly in $s$. Hence, we deduce that the contribution of the terms $T\leq r\leq \sqrt{x}$ to $S(x)$ is $\ll x(\log x)/T^2.$ Thus, 
\begin{equation}\label{CharSumEsti}
S(x)=\sum_{\substack{r\leq T\\
(r, 2m)=1
}
}\mu(r)\sum_{\substack{n\leq y\\
r^{2}\mid4n^{2}+1
}
}\left(\frac{4n^{2}+1}{m}\right)+O\left(\frac{x\log x}{T^{2}}\right).
\end{equation}

Let $r\leq T$ such that $(r, 2m)=1$, and consider the equation
$
4n^{2}+1\equiv0\pmod{r^{2}}. 
$
This congruence has $c(r^{2})=c(r)$ solutions  modulo $r^{2}$ where
$
c(r)=\prod_{p\mid r} c(p).
$
Denote these solutions by $\{a_{1},...,a_{c(r)}\}$. Then, for any integer $k$ we have
\begin{align*} 
\sum_{\substack{k r^2 m < n\leq (k+1)r^2 m\\
r^{2}\mid4n^{2}+1}}\left(\frac{4n^{2}+1}{m}\right)
&=\sum_{i=1}^{c(r)}\sum_{\substack{k r^2 m < n\leq (k+1)r^2 m\\
n\equiv a_i \text{ mod }r^{2}}}\left(\frac{4n^{2}+1}{m}\right)\\
&=\sum_{i=1}^{c(r)}\sum_{u=1}^m \left(\frac{4u^{2}+1}{m}\right)\sum_{\substack{k r^2 m< n\leq (k+1)r^2 m\\
n\equiv a_i \text{ mod }r^{2}\\ n\equiv u\text{ mod }m}} 1\\
&= c(r) \sum_{u=1}^m\left(\frac{4u^{2}+1}{m}\right),
\end{align*}
by the Chinese remainder theorem, since $(r, m)=1$. Therefore, we deduce that
\begin{align*}
\sum_{\substack{n\leq y\\
r^{2}\mid4n^{2}+1
}
}\left(\frac{4n^{2}+1}{m}\right) 
&=y\frac{c(r)}{r^2}\frac{1}{m}\sum_{u=1}^m\left(\frac{4u^{2}+1}{m}\right)+ O\big(c(r)m\big)\\
&= y \frac{c(r)}{r^2} \ex(\X(m))\prod_{\substack{p \mid m\\ p>2}}\left(1-\frac{c(p)}{p^2}\right)+ O\big(c(r)m\big),\\
\end{align*}
by Lemmas \ref{ExpectationRandom} and \ref{ExactChar}. Inserting this estimate in \eqref{CharSumEsti} we get
$$ 
S(x)= y \cdot \ex(\X(m))\prod_{\substack{p \mid m\\ p>2}}\left(1-\frac{c(p)}{p^2}\right) \sum_{\substack{r\leq T\\
(r, 2m)=1}}\mu(r)\frac{c(r)}{r^2} + O\left(m\sum_{r\leq T}c(r)+\frac{x\log x}{T^2}\right).
$$
Since $c(r)\leq 2^{\omega(r)}\leq  d(r)$ (where $d(r)$ is the divisor function), we get $\sum_{r\leq T} c(r)\ll T\log T$ and 
\[
\sum_{\substack{r> T\\
(r, 2m)=1}}\frac{\mu(r)}{r^{2}}c(r)\ll \sum_{r>T} \frac{d(r)}{r^2}\ll \frac{\log T}{T},
\]
by using that $\sum_{r\leq t} d(r)\sim t\log t$, together with partial summation.  Thus, we deduce
\begin{align*}
S(x)&= y \cdot \ex(\X(m))\prod_{\substack{p \mid m\\ p>2}}\left(1-\frac{c(p)}{p^2}\right) \sum_{\substack{r\geq 1\\
(r, 2m)=1}}\mu(r)\frac{c(r)}{r^2} + O\left(mT\log T+\frac{\sqrt{x}\log T}{T}+\frac{x\log x}{T^2}\right) \\
&= y \cdot \ex(\X(m))\prod_{p>2}\left(1-\frac{c(p)}{p^2}\right)+ O\left(mT\log T+\frac{\sqrt{x}\log T}{T}+\frac{x\log x}{T^2}\right).
\end{align*}
Choosing
$
T=\big(x/m\big)^{1/3}
$
and using \eqref{MontWein} completes the proof. 
\end{proof}

\section{Complex moments of $L(1,\chi_d)$ over $d\in \D$: Proof of Theorem \ref{ComplexMoments}}\label{Sec3}

For any $z\in \mathbb{C}$, we have 
$$L(1, \X)^z =\sum_{n=1}^{\infty} \frac{d_z(n)}{n}\X(n) $$
almost surely, where $d_z(n)$ is the $z$-th divisor function. Recall that $d_z(n)$ is the multiplicative function defined on prime powers by $d_z(p^a)=\Gamma(z+a)/(\Gamma(z)a!)$, and  for $\text{Re}(s)>1$ we have 
 $$\sum_{n=1}^{\infty} \frac{d_z(n)}{n^s}=\zeta(s)^z.$$
  We observe that
$|d_z(n)|\leq d_{|z|}(n)\leq d_k(n)$
for any integer $k\geq |z|$, and $d_k(mn)\leq d_k(m)d_k(n)$ for any positive integers $k,m,n$.  Furthermore
for $k\in {\Bbb N}$, and $y>3$ we have that
 $$
 d_k(n)e^{-n/y}\leq
e^{k/y}\sum_{a_1...a_k=n}e^{-(a_1+...+a_k)/y},
$$ and so
\begin{equation}\label{Divisor2}
 \sum_{n=1}^{\infty}\frac{d_k(n)}{n}e^{-n/y}\leq \left(e^{1/y}
\sum_{a=1}^{\infty}\frac{e^{-a/y}}{a}\right)^k\leq (\log
3y)^k.
\end{equation}

In order to prove Theorem \ref{ComplexMoments}, we first need some preliminary results. 
We define a discriminant $d$ to be {\it exceptional} if there exists a complex number $s$ such that  $L(s,\chi_d)=0$ and \begin{equation}\label{Exceptional}
\text{Re}(s)\geq 1-\frac{c}{\log (|d|(\text{Im}(s)+2))}
\end{equation} for some sufficiently small constant $c>0$. 
One expects that there are no such discriminants, but what is known unconditionally is that these discriminants, if they exist, must be very rare.
Indeed, it is shown in Chapter 14 of \cite{Da} that between any
two powers of $2$ there is at most one exceptional discriminant $d$. In particular, it follows that there are at most $O(\log x)$ such discriminants up to $x$. 

 If $\chi$ is a non-principal and non-exceptional Dirichlet character modulo $q$, then we have the following standard bound for $\log L(1+it, \chi)$ (see for example Lemma 2.2 of \cite{La1})
\begin{equation}\label{StandardBound}
\log L(1+it, \chi)\ll \log_2 \big(q(|t|+2)\big).
\end{equation}
We can obtain a much better bound for $\log L(s,\chi)$, with $s$ close to $1$, if $L(s,\chi)$ has no zeros in a certain  rectangle containing $s$.
\begin{lem}\label{BoundLZeroFree} Let $q$ be large and put $\eta=1/\log_2q$. Let $0<\epsilon<1/2$ be fixed.  Assume that $L(z,\chi)$ has no zeros in the rectangle $\{z: 1-\epsilon \leq \text{Re}(z)\leq 1\text{ and } |\text{Im}(z)|\leq 2(\log q)^{2/\epsilon}\}.$ Then for any $s=\sigma+it$ with $1-\eta\leq \sigma\leq 1$ and $|t|\leq \log^4q$ we have
$$ |\log L(s,\chi)|\leq \log_3 q+O_{\epsilon}(1).$$
\end{lem}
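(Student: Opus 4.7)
The plan is to approximate $\log L(s,\chi)$ by a short smoothed Dirichlet polynomial over prime powers via a contour integral, then estimate the polynomial using Mertens' theorems. Starting from $\log L(w,\chi)=\sum_{n\geq 2}\Lambda(n)\chi(n)/(n^{w}\log n)$ for $\re(w)>1$, insert the smoothing weight $e^{-n/x}$ with $x=(\log q)^{3/\epsilon}$ and apply the Mellin--Barnes identity $e^{-u}=\tfrac{1}{2\pi i}\int_{(c)}\Gamma(z)u^{-z}\,dz$ termwise. Fubini gives, for any $c>\eta$,
\[
\sum_{n}\frac{\Lambda(n)\chi(n)}{n^{s}\log n}\,e^{-n/x}=\frac{1}{2\pi i}\int_{(c)}\Gamma(z)\,x^{z}\log L(s+z,\chi)\,dz.
\]

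Next I shift the contour from $\re(z)=c$ to $\re(z)=-\epsilon/2$, truncating at $|\im(z)|=T:=10\log_{2}q$. The zero-free hypothesis, combined with $|t|\leq\log^{4}q\ll(\log q)^{2/\epsilon}$ (valid since $\epsilon<1/2$), ensures that $\log L(s+z,\chi)$ is holomorphic throughout the rectangle swept by the contour. The only pole crossed is the simple pole of $\Gamma$ at $z=0$, with residue $\log L(s,\chi)$, and the horizontal truncation tails contribute $O(e^{-T})$ via the exponential decay of $\Gamma$. Thus
\[
\log L(s,\chi)=\sum_{n}\frac{\Lambda(n)\chi(n)}{n^{s}\log n}\,e^{-n/x}-\frac{1}{2\pi i}\int_{(-\epsilon/2)}\Gamma(z)\,x^{z}\log L(s+z,\chi)\,dz+O(e^{-T}).
\]

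For the shifted integral I combine Stirling's bound $|\Gamma(-\epsilon/2+iv)|\ll_{\epsilon}e^{-\pi|v|/2}$ with the crude estimate $|\log L(s+z,\chi)|\ll\log q$ inside the zero-free rectangle, obtained by pairing the convexity bound $|L(s+z,\chi)|\leq q^{\epsilon/2+o(1)}$ (controlling the real part of $\log L$) with the standard argument-principle bound $|\arg L(s+z,\chi)|\ll\log q$ (controlling the imaginary part). The choice $x=(\log q)^{3/\epsilon}$ makes $x^{-\epsilon/2}\log q\ll(\log q)^{-1/2}$, so the integral is $o(1)$.

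For the main sum I reduce to bounding $\sum_{p\leq x}p^{-\sigma}$: prime powers $p^{k}$ with $k\geq 2$ contribute $O(1)$, and the smoothing tail past $n=x$ contributes $O_{\epsilon}(1)$ since $x^{1-\sigma}\leq x^{\eta}=e^{3/\epsilon}$. Decomposing $p^{-\sigma}=p^{-1}+p^{-1}(p^{\eta}-1)$ and using $p^{\eta}-1\leq\eta(\log p)p^{\eta}$, the second piece is $\ll_{\epsilon}\eta\sum_{p\leq x}(\log p)/p\ll_{\epsilon}\eta\log x=3/\epsilon=O_{\epsilon}(1)$ by Mertens' second theorem, while the first piece gives $\log_{2}x+O(1)=\log_{3}q+O_{\epsilon}(1)$ by Mertens' third theorem. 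Assembling everything yields the claimed bound. The most delicate ingredient is the uniform control $|\log L|\ll\log q$ throughout the zero-free rectangle; the bound on $\arg L$ relies on classical zero-counting, while the rest of the argument is a routine contour-shifting calculation.
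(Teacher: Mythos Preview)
Your overall strategy---approximate $\log L(s,\chi)$ by a short Dirichlet polynomial via a contour shift, then bound the polynomial by Mertens---is exactly what the paper does, except that the paper outsources the first step to Lemma~8.2 of Granville--Soundararajan \cite{GrSo1} (stated just after the statement you are proving) rather than redoing the contour argument. So your route is not genuinely different; it is the unpacked version of the same proof.

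There is, however, a real gap in your justification of the bound $|\log L(s+z,\chi)|\ll \log q$ on the shifted line $\re(z)=-\epsilon/2$. Convexity gives only an \emph{upper} bound $\log|L(w,\chi)|\ll_\epsilon \log q$, and the argument-principle estimate bounds $\arg L(w,\chi)$; neither yields a \emph{lower} bound on $|L(w,\chi)|$, and without one $|\log L|$ could be arbitrarily large. What you actually need is $|L(w,\chi)|\gg q^{-C}$ for $w$ inside the zero-free rectangle, and this genuinely uses the zero-free hypothesis: one either applies Borel--Carath\'eodory to $\log L$ on a disk centred at $2+i\,\im(w)$ (using convexity for the upper bound on $\re\log L$ on the boundary and zero-freeness for holomorphy), or integrates the standard partial-fraction approximation $\tfrac{L'}{L}(w,\chi)=\sum_{|\im\rho-\im w|\le 1}\tfrac{1}{w-\rho}+O(\log q)$ along a horizontal segment from $\re=2$, noting that every term in the sum is $\ll_\epsilon 1$ since all zeros are at distance $\ge\epsilon/2-\eta$. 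This is precisely the content buried inside the Granville--Soundararajan lemma the paper cites. A minor related point: writing the shifted integral as $\int_{(-\epsilon/2)}$ over the full line is unsafe, since outside $|\im(z)|\lesssim (\log q)^{2/\epsilon}$ you have no zero-free information and hence no branch of $\log L$; you should keep the contour truncated at height $T$ throughout and bound the finite segment directly.
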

To prove this result we need the following lemma from \cite{GrSo1}.
\begin{lem}[Lemma 8.2 of \cite{GrSo1}] \label{lemGrSo}
Let $s=\sigma+it$ with $\sigma>1/2$ and $|t|\leq 2q$. Let $y\geq 2$ be a real number, and let $1/2\leq \sigma_0<\sigma$. Suppose that the rectangle $\{z:\sigma_0<\text{Re}(z)\leq 1, |\text{Im}(z)-t|\leq y+3\}$ contains no zeros of $L(z,\chi)$. Put  $\sigma_1=\min(\frac{\sigma+\sigma_0}{2},\sigma_0+\frac{1}{\log y})$. Then
$$ \log L(s,\chi)=\sum_{n=2}^y\frac{\Lambda(n)\chi(n)}{n^s\log n}+O\left(\frac{\log q}{(\sigma_1-\sigma_0)^2}y^{\sigma_1-\sigma}\right).$$
\end{lem}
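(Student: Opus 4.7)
The plan is to apply Lemma \ref{lemGrSo} with a judicious choice of parameters to approximate $\log L(s,\chi)$ by a short Dirichlet series over primes, and then to bound that series by hand. Throughout, write $\sigma = 1 - \eta'$ with $0 \le \eta' \le \eta = 1/\log_2 q$.

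First, I would set $y := (\log q)^{2/\epsilon}$ and $\sigma_0 := 1 - \epsilon$. Since $\epsilon < 1/2$, we have $\sigma_0 > 1/2$ and $2/\epsilon > 4$, so for $|t| \le \log^4 q$ and large $q$ the rectangle $\{z : \sigma_0 < \mathrm{Re}(z) \le 1, |\mathrm{Im}(z) - t| \le y+3\}$ sits inside the zero-free rectangle hypothesized in the lemma. Computing the auxiliary parameter in Lemma \ref{lemGrSo}, $(\sigma + \sigma_0)/2 \ge 1 - \epsilon/2 - O(1/\log_2 q)$ while $\sigma_0 + 1/\log y = 1 - \epsilon + \epsilon/(2\log_2 q)$, so $\sigma_1 = \sigma_0 + 1/\log y$. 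Then $\sigma_1 - \sigma \le -\epsilon + O_\epsilon(1/\log_2 q)$, which gives $y^{\sigma_1 - \sigma} = O_\epsilon((\log q)^{-2})$, and the error term in Lemma \ref{lemGrSo} is
\[
\frac{\log q}{(\sigma_1 - \sigma_0)^2} \, y^{\sigma_1 - \sigma} \ll_\epsilon \frac{(\log_2 q)^2}{\log q} = o(1).
\]
Hence $\log L(s,\chi) = \sum_{n \le y} \Lambda(n)\chi(n)/(n^s \log n) + O_\epsilon(1)$.

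Next, I would peel off the prime-power contribution. For a prime power $p^k \le y$ with $k \ge 2$, $p^{k\sigma} \ge p^k \cdot y^{-2k/\log_2 q} \ge p^k \cdot e^{-4k/\epsilon}$, so summing over such $p^k$ yields $O_\epsilon(1)$. It remains to control $\sum_{p \le y} \chi(p)/p^s$, which I bound in absolute value by $\sum_{p \le y} p^{-\sigma}$ and split as
\[
\sum_{p \le y} \frac{1}{p^\sigma} = \sum_{p \le y} \frac{1}{p} + \sum_{p \le y} \frac{p^{\eta'} - 1}{p}.
\]
Mertens' theorem handles the first piece: $\sum_{p \le y} 1/p = \log_2 y + M + o(1) = \log_3 q + O_\epsilon(1)$, which contributes the leading $\log_3 q$.

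The main subtlety is the second piece. A naive bound $p^{\eta'} \le e^{2/\epsilon}$ valid on the whole range would only give $O_\epsilon(\log_3 q)$, not $O_\epsilon(1)$, which would spoil the leading constant $1$ in front of $\log_3 q$. To avoid this, I would split at $X := (\log q)^{1/2}$. For $p \le X$, $\eta' \log p \le 1/2$, so $p^{\eta'} - 1 \le e^{1/2} \eta' \log p$, and partial summation with $\sum_{p \le X} (\log p)/p \sim \log X = \tfrac{1}{2}\log_2 q$ gives a contribution $\le e^{1/2} \eta' \cdot \tfrac{1}{2}\log_2 q + O(1) = O(1)$. For $X < p \le y$, $p^{\eta'} - 1 \le e^{2/\epsilon}$, while $\sum_{X < p \le y} 1/p = \log_2 y - \log_2 X + o(1) = O_\epsilon(1)$, again giving an $O_\epsilon(1)$ contribution. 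Assembling the pieces yields $|\log L(s,\chi)| \le \log_3 q + O_\epsilon(1)$. The main obstacle is precisely this last step: extracting the leading constant $1$ forces the splitting at $X = (\log q)^{1/2}$, where the exponential $p^{\eta'}$ is still close to $1$, instead of relying on a uniform bound on $[2,y]$.
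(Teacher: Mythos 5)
Your argument does not prove the statement in question. The statement is Lemma \ref{lemGrSo} itself --- the approximation $\log L(s,\chi)=\sum_{n\le y}\Lambda(n)\chi(n)/(n^s\log n)+O\left(\log q\,(\sigma_1-\sigma_0)^{-2}y^{\sigma_1-\sigma}\right)$ under a zero-free hypothesis --- and your very first step is ``apply Lemma \ref{lemGrSo} with a judicious choice of parameters,'' so the proposal is circular: it assumes exactly the result it is meant to establish. What you actually end up proving is the \emph{next} lemma of the paper (Lemma \ref{BoundLZeroFree}, the bound $|\log L(s,\chi)|\le\log_3 q+O_{\epsilon}(1)$), whose proof in the paper is precisely the specialization $y=(\log q)^{2/\epsilon}$, $\sigma_0=1-\epsilon$ that you carry out. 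Incidentally, your splitting at $X=(\log q)^{1/2}$ there is unnecessary: since $\eta'\log p\le \eta\log y=2/\epsilon$ uniformly for $p\le y$, one has $p^{\eta'}-1\ll_{\epsilon}\eta\log p$ on the whole range, and $\sum_{p\le y}\eta(\log p)/p\ll \eta\log y=O_{\epsilon}(1)$ already gives the leading constant $1$; this is the one-line estimate $p^{-\sigma}=p^{-1}(1+O(\eta\log p))$ used in the paper. But none of this addresses Lemma \ref{lemGrSo}.

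For the record, the paper offers no proof of Lemma \ref{lemGrSo}: it is quoted verbatim as Lemma 8.2 of Granville--Soundararajan \cite{GrSo1}, so there is nothing in the paper to compare your argument against except the observation that the logical direction is reversed. A genuine proof would have to be built from scratch, along the lines of: represent the weighted sum $\sum_{n}\Lambda(n)\chi(n)n^{-s}(\log n)^{-1}\cdot\frac{1}{2\pi i}\int_{(c)}(y/n)^{w}w^{-2}\,dw$ as a vertical-line integral of $\log L(s+w,\chi)$ against the kernel $y^{w}/w^{2}$, shift the contour to $\mathrm{Re}(w)=\sigma_1-\sigma<0$, and use the hypothesized zero-free rectangle together with a Borel--Carath\'eodory/Jensen-type bound for $\log L$ in that region --- which is where the factor $\log q/(\sigma_1-\sigma_0)^{2}$ in the error term originates --- to control the shifted integral. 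That contour-shifting argument is the missing content of your proposal.
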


\begin{proof}[Proof of Lemma \ref{BoundLZeroFree}]
We use Lemma \ref{lemGrSo} with $1-\eta\leq \sigma\leq 1$, $\sigma_0=1-\epsilon$ and $y=(\log q)^{2/\epsilon}$.  Therefore,  if $L(z,\chi)$ has no zeros in the rectangle $\{z: 1-\epsilon \leq \text{Re}(z)\leq 1\text{ and } |\text{Im}(z)|\leq 2(\log q)^{2/\epsilon}\},$ we get
$$
|\log L(s,\chi)|=\left|\sum_{p\leq (\log q)^{2/\epsilon}}\frac{\chi(p)}{p^s}\right|+O(1)\leq \sum_{p\leq(\log q)^{2/\epsilon}}\frac{1+O(\eta \log p)}{p}+O(1)\leq \log_3 q+O_{\epsilon}(1).
$$
\end{proof}

Using Lemma \ref{BoundLZeroFree} we obtain the following approximation to $L(1,\chi)^z$, if $L(z,\chi)$ has no zeros in a small region to the left of the line $\re(s)=1$. 

\begin{pro}\label{ShortApproxL}
Let $q$ be large and $0<\epsilon<1/2$ be fixed. Let $y$ be a real number such that $\log q/\log_2 q\leq \log y\leq \log q$. Furthermore, assume that $L(s,\chi)$ has no zeros inside the rectangle $\{s:1-\epsilon <\text{Re}(s)\leq 1 \text{ and } |\text{Im}(s)|\leq 2(\log q)^{2/\epsilon}\}$.
Then for any complex number $z$ such that $|z|\leq  \log y/(4\log_2 q \log_3 q)$ we have
$$L(1,\chi)^z=\sum_{n=1}^{\infty}\frac{d_z(n)\chi(n)}{n}e^{-n/y}+O_{\epsilon}\left(\exp\left(-\frac{\log y}{2\log_2 q}\right)\right).$$
\end{pro}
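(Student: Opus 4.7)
The plan is to use Mellin inversion and a contour shift. Starting from the Mellin pair $e^{-u}=(2\pi i)^{-1}\int_{(c)}\Gamma(s)u^{-s}ds$ valid for $c>0$, I would write
$$\sum_{n=1}^{\infty}\frac{d_z(n)\chi(n)}{n}e^{-n/y}=\frac{1}{2\pi i}\int_{(c)}\Gamma(s)\,y^{s}L(1+s,\chi)^{z}\,ds,$$
where $c>0$ is chosen so that the Dirichlet series $\sum d_z(n)\chi(n)/n^{1+s}=L(1+s,\chi)^{z}$ converges absolutely, which is justified by the bound $|d_z(n)|\leq d_{\lceil |z|\rceil}(n)$ and the rapid decay of $\Gamma$ on vertical lines.

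Next, I would shift the contour from $\textup{Re}(s)=c$ to $\textup{Re}(s)=-\eta$ with $\eta=1/\log_2 q$. By the zero-free hypothesis, the integrand is holomorphic in the enclosed strip except for the simple pole of $\Gamma(s)$ at $s=0$, whose residue equals $L(1,\chi)^{z}$. Thus
$$L(1,\chi)^{z}=\sum_{n=1}^{\infty}\frac{d_z(n)\chi(n)}{n}e^{-n/y}-\frac{1}{2\pi i}\int_{(-\eta)}\Gamma(s)\,y^{s}L(1+s,\chi)^{z}\,ds,$$
so it suffices to bound the remaining integral by $O_{\epsilon}\bigl(\exp(-\tfrac{\log y}{2\log_2 q})\bigr)$. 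Along $\textup{Re}(s)=-\eta$ we have $|y^{s}|=y^{-\eta}=\exp(-\log y/\log_2 q)$, which already supplies most of the required saving.

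The core estimation step is to control $L(1+s,\chi)^{z}$ on the shifted line. I would split into $|t|\leq (\log q)^{4}$ and $|t|>(\log q)^{4}$, where $t=\textup{Im}(s)$. In the first range, which lies inside the zero-free rectangle, Lemma \ref{BoundLZeroFree} gives $|\log L(1+s,\chi)|\leq \log_3 q+O_{\epsilon}(1)$, so using $|z|\leq \log y/(4\log_2 q\log_3 q)$ we get
$$|L(1+s,\chi)^{z}|\leq \exp\!\Bigl(\tfrac{\log y}{4\log_2 q}(1+o(1))\Bigr),$$
while $\int_{|t|\leq (\log q)^{4}}|\Gamma(-\eta+it)|\,dt\ll \log_2 q$ since the pole of $\Gamma$ at $s=0$ contributes a factor $\asymp 1/\eta=\log_2 q$ and the Gamma function otherwise decays. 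Combined with $y^{-\eta}$, this yields a contribution of size $\exp(-3\log y/(4\log_2 q)+o(\log y/\log_2 q))$, which is absorbed by the target error. In the second range $|t|>(\log q)^{4}$, I would invoke the exponential decay $|\Gamma(-\eta+it)|\ll e^{-|t|/3}$ together with the standard convexity bound $|L(1+s,\chi)|\ll (q|t|)^{O(1)}$, which after raising to the power $z$ remains subpolynomial in $|t|$ and hence is crushed by the Gamma decay.

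The main obstacle is the bookkeeping in the first range: one must verify that the combined loss from $|z|\log_3 q$ (yielding $\exp(\log y/(4\log_2 q))$) and from the $1/\eta$ pole of $\Gamma$ at $s=0$ strictly beats the gain $y^{-\eta}$, and that the choice $\eta=1/\log_2 q$ is calibrated precisely so that the net exponent is $-(1/2+c)\log y/\log_2 q$ for some $c>0$. Once this calibration is confirmed, the two ranges of $|t|$ combine to give exactly the stated error $O_{\epsilon}(\exp(-\log y/(2\log_2 q)))$.
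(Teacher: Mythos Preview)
Your approach is essentially the same Mellin-inversion and contour-shift argument as the paper's, and the analysis on the segment $|t|\le(\log q)^4$ is correct. However, there is a genuine gap in the treatment of the tails $|t|>(\log q)^4$.

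You shift the entire contour to the line $\textup{Re}(s)=-\eta$ and claim the integrand is holomorphic in the enclosed strip by the zero-free hypothesis. But the hypothesis only guarantees $L(1+s,\chi)\ne 0$ for $|\textup{Im}(s)|\le 2(\log q)^{2/\epsilon}$; beyond that height there may well be zeros with $\textup{Re}(1+s)\in(1-\eta,1)$, and then $L(1+s,\chi)^z=\exp(z\log L(1+s,\chi))$ is not even defined as a holomorphic function across such points for general complex $z$. Consequently the contour shift is not justified, and the convexity bound you quote for $|L|$ does not help: a bound on $|L|$ does not translate into a bound on $|L^z|$ without control of $\log L$, and near a zero $|L^z|$ can blow up (or fail to exist single-valuedly).

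The paper fixes this by not shifting all the way to $\textup{Re}(s)=-\eta$. Instead it moves to a keyhole contour that stays on the line $\textup{Re}(s)=0$ for $|t|>(\log q)^4$ and only dips to $\textup{Re}(s)=-\eta$ on the bounded segment $|t|\le(\log q)^4$. On $\textup{Re}(s)=0$ one has the unconditional estimate $\log L(1+it,\chi)\ll\log_2\bigl(q(|t|+2)\bigr)$ from \eqref{StandardBound}, so $|L(1+it,\chi)^z|\le\exp\bigl(O(|z|\log_2(q|t|))\bigr)$, and Stirling's decay $|\Gamma(it)|\ll e^{-\pi|t|/2}$ then kills the tails outright. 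Once you adopt this contour, your bounded-$|t|$ calculation (using Lemma~\ref{BoundLZeroFree}, the factor $y^{-\eta}$, and the $1/\eta$ from the $\Gamma$-pole) goes through exactly as you describe and gives the stated error.
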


\begin{proof} Since $\frac{1}{2\pi i}\int_{2-i\infty}^{2+i\infty}y^s\Gamma(s)ds= e^{-1/y}$ then
$$\frac{1}{2\pi i}\int_{2-i\infty}^{2+i\infty}L(1+s,\chi)^z\Gamma(s)y^sds= \sum_{n=1}^{\infty}\frac{d_z(n)\chi(n)}{n}e^{-n/y}.$$
we shift the contour to $\mathcal{C}$, where $\mathcal{C}$ is the path which joins
$$ -i\infty, -i(\log q)^4, -\eta-i(\log q)^4, -\eta+i(\log q)^4, -i(\log q)^4, +i\infty,$$
where $\eta=1/\log_2 q$.
We encounter a simple pole at $s=0$ which leaves the residue $L(1,\chi)^z$. Using the bound \eqref{StandardBound} together  with Stirling's formula we obtain
$$
\frac{1}{2\pi i}\left(\int_{-i\infty}^{-i(\log q)^4}+\int_{i(\log q)^4}^{+i\infty}\right)L(1+s,\chi)^z\Gamma(s) y^s ds\ll \int_{(\log q)^4}^{\infty}e^{O(|z|\log_2 qt)}e^{-\frac{\pi}{3}t}dt\ll \frac{1}{q}.
$$
Finally, using that $\Gamma(s)$ has a simple pole at $s=0$ together with Lemma \ref{BoundLZeroFree} and Stirling's formula, we deduce that
\begin{align*}
&\frac{1}{2\pi i}\left(\int_{-i(\log q)^4}^{-\eta-i(\log q)^4}+\int_{-\eta-i(\log q)^4}^{-\eta+i(\log q)^4}+\int_{-\eta+i(\log q)^4}^{i(\log q)^4}\right)L(1+s,\chi)^z\Gamma(s)y^s ds\\
&\ll \exp\left(-\frac{\pi}{3}(\log q)^4+O(|z|\log_3 q)\right)+ \frac{y^{-\eta}}{\eta}\exp\big(|z|\log_3q+O_{\epsilon}(|z|)
\big)(\log q)^4\\
&\ll_{\epsilon} \exp\left(-\frac{\log y}{2\log_2 q}\right).
\end{align*}

\end{proof}

We are now ready to prove Theorem \ref{ComplexMoments}.
\begin{proof}[Proof of Theorem \ref{ComplexMoments}]
Let $\Dg$ be the set of fundamental discriminants $d\in \D$ such that $d>\sqrt{x}$ and $L(s, \chi_d)$ has no zeros in the rectangle $\{s: 9/10 <\text{Re}(s)\leq 1 \text{ and } |\text{Im}(s)|\leq 2(\log x)^{20}\}$. To bound $|\D\setminus\Dg|$ we use the following zero-density result of Heath-Brown \cite{HB}, which states that for $1/2<\sigma<1$ and
 any $\epsilon>0$ we have
$$\sums_{|d|\leq x} N(\sigma,T, \chi_d)\ll (xT)^{\epsilon}x^{3(1-\sigma)/(2-\sigma)}T^{(3-2\sigma)/(2-\sigma)},$$
where $N(\sigma, T, \chi_d)$ is the number of zeros $\rho$ of $L(s,\chi_d)$ with $\re(\rho)\geq \sigma$ and $|\im(\rho)|\leq T$, and $\sums$ indicates that the sum is over fundamental discriminants. Then, it follows from this bound that
$$ |\D|-|\Dg|\ll x^{1/3}.$$
Using this estimate together with the bound \eqref{StandardBound} we obtain
\begin{equation}\label{GoodL}
\sumst_{d\in \D} L(1,\chi_d)^z- \sum_{d\in \Dg} L(1,\chi_d)^z
\ll x^{1/3} \exp\big(O(|z|\log_2 x)\big)\ll x^{3/8}.
\end{equation}
Let $y= x^{1/6} $, and put $k=\lceil |z|\rceil$.  Then, it follows from Proposition \ref{ShortApproxL} that
\begin{equation}\label{ApproxGood}
\sum_{d\in \Dg} L(1,\chi_d)^z= \sum_{d\in \Dg}
\sum_{m=1}^{\infty}\frac{d_{z}(m)\chi_{d}(m)e^{-m/y}}{m}+ O\left(|\D| \exp\left(-\frac{\log x}{20 \log\log x}\right)\right).
\end{equation} 
We now extend the main term of the last estimate, so as to include all fundamental discriminants $d\in \D$. Using  \eqref{Divisor2}, we deduce that 
$$ 
\sum_{d\in \D \setminus \Dg}
\sum_{m=1}^{\infty}\frac{d_{z}(m)\chi_{d}(m)e^{-m/y}}{m}\ll (|\D|-|\Dg|)\sum_{m=1}^{\infty}\frac{d_k(m)}{m}e^{-m/y} \ll x^{3/8}.
$$
Combining this estimate with \eqref{GoodL} and \eqref{ApproxGood}  gives
$$ 
\sumst_{d\in \D} L(1,\chi_d)^z= 
\sum_{m=1}^{\infty}\frac{d_{z}(m)}{m}e^{-m/y}\sum_{d\in \D}\chi_{d}(m)+O\left(x^{1/2}\exp\left(-\frac{\log x}{20 \log\log x}\right)\right).
$$

Now, it follows from Proposition \ref{AsympChar} that
\begin{multline}
\frac{1}{|\D|}\sum_{m=1}^{\infty}\frac{d_{z}(m)}{m}e^{-m/y}\sum_{d\in \D}\chi_{d}(m)=\sum_{m=1}^{\infty}\frac{d_{z}(m)\mathbb{E}(X(m))}{m}e^{-m/y}
\\
+O\left(x^{-1/6}\log x\sum_{m=1}^{\infty}\frac{d_k(m)}{m^{1/3}}e^{-m/y}\right).\label{eq:expected-error-1}
\end{multline}
To bound the error term in the last estimate, we split the sum into two parts: $m\leq y \log^2 y$ and $m> y \log^2 y$. The contribution of the first part is 
$$
 \leq  \sum_{m\leq y\log^2 y} \left( \frac{y \log^2 y}{m} \right)^{2/3} \frac{d_k(m)}{m^{1/3}} e^{-m/y}
 \leq (y\log^2 y)^{2/3}\sum_{m=1}^{\infty} \frac{d_k(m)}{m} e^{-m/y}
\ll y^{2/3} (\log 3y)^{k+4/3},
$$
by \eqref{Divisor2}.
The remaining terms contribute
\begin{align*}
 \leq  \exp\left(-\frac{(\log y)^2}{2}\right)\sum_{m=1}^{\infty}\frac{d_k(m)}{m^{1/3}}e^{-m/(2y)}
 & \leq  \exp\left(-\frac{(\log y)^2}{2}\right)\left(e^{1/(2y)}\sum_{a=1}^{\infty}\frac{e^{-a/(2y)}}{a^{1/3}}\right)^{k}\\
 & \ll  \exp\left(-\frac{(\log y)^2}{2}\right) y^k \ll \exp\left(-\frac{(\log y)^2}{4}\right),
\end{align*}
using an argument similar to \eqref{Divisor2}. Therefore, we deduce that the error term in \eqref{eq:expected-error-1} is 
$
\ll x^{-1/6} y^{2/3} (\log x)^{k+2} \ll x^{-1/20}.$

We now wish to remove the $e^{-n/y}$ factor from the main term of (\ref{eq:expected-error-1}),
and in so doing we introduce an error of
\begin{equation}\label{ErrorSmooth}
\sum_{m=1}^{\infty}\frac{d_{z}(m)\mathbb{E}(X(m))(1-e^{-m/y})}{m}.
\end{equation}
We shall use the bound $1-e^{-t}\ll t^{\alpha}$ which is valid for all $t>0$ and  $0<\alpha\leq 1$. Also, by Lemma \ref{ExpectationRandom} we have $\vert\mathbb{E}(X(m))\vert\ll m_{0}^{-1}$, where $m_0$ is the squarefree part of $m$. Choosing $\alpha= 1/\log_2 x$, and writing $m=m_0 m_1^2$ we deduce that this sum is 
$$
\ll y^{-\alpha}\sum_{m=1}^{\infty}\frac{d_k(m)}{m_{0}m^{1-\alpha}}\\
\leq  y^{-\alpha}\sum_{m_0=1}^{\infty}
\frac{d_k(m_{0})}{m_{0}^{2-\alpha}}\sum_{m_1=1}^{\infty}\frac{d_k^{2}(m_{1})}{m_{1}^{2-2\alpha}}
= y^{-\alpha}\zeta(2-\alpha)^k \sum_{n=1}^{\infty}\frac{d_k^{2}(n)}{n^{2-2\alpha}}.
$$
Finally, we use the following bound, which follows from Lemma 3.3 of \cite{La1} 
\[
 \sum_{n=1}^{\infty}\frac{d_k^{2}(n)}{n^{2-2\alpha}} \leq \exp\left((2+o(1)) k\log_2 k \right).
 \]
This shows that the sum in \eqref{ErrorSmooth} is 
$ \ll \exp\left(-\log x/(20\log_2x)\right),$
which completes the proof.
\end{proof}

\section{The distribution of the random model $L(1,\X)$}\label{Sec4}

\subsection{Main results and proof of Theorem \ref{ExponentialDecay}}

Throughout this section, we shall  focus only on proving the desired results for $\Phi_{\X}(\tau)$, since the proofs for $\Psi_{\X}(\tau)$ require only some minor adjustments. Since the $\X(p)$ are independent, then for any $z\in \mathbb{C}$ we have
$$
\ex\left(L(1,\X)^z\right)= \prod_{p} E_p(z),
$$
where
\begin{equation}\label{expectation}
E_p(z):= \ex\left(\left(1-\frac{\X(p)}{p}\right)^{-z}\right)= \alpha_p \left(1-\frac{1}{p}\right)^{-z}+
\beta_p \left(1+\frac{1}{p}\right)^{-z}+\gamma_p.
\end{equation}
For $z\in \mathbb{C}$ we define
$$\Lo(z):=\log\ex\left(L(1,\X)^z\right)=\sum_p\log E_p(z).$$
Let $\tau$ be a large real number and consider the equation
\begin{equation}\label{SaddlePointEq}
 \Big(\ex\left(L(1,\X)^r\right)(e^{\gamma} \tau)^{-r}\Big)'=0 \Longleftrightarrow \Lo'(r)=\log\tau+\gamma, 
 \end{equation}
where the derivative is taken with respect to the real variable $r$. Then it follows from Proposition \ref{MomentRand} below that $\lim_{r\to\infty} \Lo'(r)=\infty$. Moreover, a simple calculation shows that $E_p''(r)E_p(r)>(E_p'(r))^2$ for all primes $p$, and hence that $\Lo''(r)>0$. Thus, we deduce that equation \eqref{SaddlePointEq} has a unique solution $\kappa=\kappa(\tau)$. Using a careful saddle point analysis we obtain an asymptotic formula 
for $\Phi_{\X}(\tau)$ in terms of the moment $\ex\left(L(1,\X)^{r}\right)$ evaluated at the saddle point $\kappa$.
\begin{thm}\label{SaddlePoint}
Let $\tau$ be large and $\kappa$ denote the unique solution to \eqref{SaddlePointEq}. Then,  we have
\begin{equation}\label{MainSaddle}
\Phi_{\X}(\tau)= \frac{\ex\left(L(1,\X)^{\kappa}\right)(e^{\gamma}\tau)^{-\kappa}}{\kappa \sqrt{2\pi \Lo''(\kappa)}}\left(1+O\left(\sqrt{\frac{\log \kappa}{\kappa}} \right)\right).
\end{equation}
Moreover, for any $0\leq \lambda\leq 1/\kappa$ we have 
\begin{equation}\label{SmallShift}
\Phi_{\X}\big(e^{-\lambda}\tau\big)=\Phi_{\X}(\tau)\big(1+O(\lambda \kappa)\big).
\end{equation}
\end{thm}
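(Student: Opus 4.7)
The plan is a classical saddle-point analysis of the Mellin--Laplace representation of $\Phi_{\X}(\tau)$. Write $V = \log L(1,\X)$, $A = \gamma + \log \tau$, and $M(s) = \ex\bigl(L(1,\X)^s\bigr)$. A direct prime-by-prime check shows that $E_p(s) = 1 + O_{|s|}(1/p^2)$ for large $p$ (leveraging the Jacobsthal-type near-cancellation $\alpha_p - \beta_p = O(1/p)$ from \eqref{Jacobsthal}), so $M(s) = \prod_p E_p(s)$ is defined and analytic on $\re(s) > 0$. The Fourier--Laplace inversion of the indicator of $\{V > A\}$ then gives
$$
\Phi_{\X}(\tau) = \frac{1}{2\pi i}\int_{\kappa - i\infty}^{\kappa + i\infty} M(s)\,e^{-sA}\,\frac{ds}{s}
$$
for any $\kappa > 0$. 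I would take $\kappa$ to be the unique solution of $\Lo'(\kappa) = A$, whose existence and uniqueness follows from the strict convexity $\Lo'' > 0$ recorded just before \eqref{SaddlePointEq}.

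On the line $s = \kappa + it$, the exponent is stationary at $t = 0$. Splitting the integral at some threshold $T_0$ satisfying $T_0^2\,\Lo''(\kappa) \asymp \log\kappa$ and $T_0^3\,|\Lo'''(\kappa)| = o(1)$ (compatible once Proposition \ref{MomentRand} pins down the order of the derivatives of $\Lo$ at $\kappa$), the Taylor expansion
$$
\Lo(\kappa + it) - (\kappa + it)A \;=\; \Lo(\kappa) - \kappa A - \tfrac{1}{2}\Lo''(\kappa)\,t^2 + O\bigl(|\Lo'''(\kappa)|\,t^3\bigr),
$$
together with $1/(\kappa + it) = 1/\kappa + O\bigl(|t|/\kappa^2\bigr)$, reduces the inner integral to a Gaussian whose leading value is exactly $M(\kappa)(e^\gamma\tau)^{-\kappa}/\bigl(\kappa\sqrt{2\pi\,\Lo''(\kappa)}\bigr)$. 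The cubic correction, the asymmetric $O(|t|/\kappa^2)$ piece (which does not cancel on its own since $1/(\kappa+it)$ is not real), and the Gaussian truncation together produce a relative error of $O(\sqrt{\log\kappa/\kappa})$.

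The principal obstacle is to dispose of the outer range $|t| > T_0$ by establishing the sub-Gaussian bound
$
|M(\kappa + it)| \ll M(\kappa)\exp\bigl(-c\,T_0^2\,\Lo''(\kappa)\bigr)
$
so that its contribution is absorbed in the error. I would obtain this by a prime-by-prime comparison $|E_p(\kappa+it)|\leq E_p(\kappa)\,\exp(-\epsilon_p(t))$: for small primes the oscillation of $(1\pm 1/p)^{-it}$ generates genuine quadratic-in-$t$ decay whose aggregate matches the size of $\Lo''(\kappa)$, while for larger primes $|E_p(\kappa + it)| \leq E_p(\kappa)$ trivially. Assembling these is where the bulk of the technical work lies and is the main point at which the asymptotics of $\Lo$ from Proposition \ref{MomentRand} must be invoked quantitatively.

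For the perturbation estimate \eqref{SmallShift}, I would argue directly that $\Phi_{\X}(e^{-\lambda}\tau) - \Phi_{\X}(\tau) = \pr(A - \lambda < V \leq A)$ and pass to the exponentially tilted law $d\pr_\kappa = M(\kappa)^{-1}e^{\kappa V}d\pr$, under which $V$ has mean $A$ and variance $\Lo''(\kappa)$:
$$
\pr(A - \lambda < V \leq A) = M(\kappa)\,e^{-\kappa A}\int_{-\lambda}^{0} e^{-\kappa u}\,d\pr_\kappa(V - A \in du).
$$
A local central limit theorem for $V - A$ under $\pr_\kappa$, derivable from the very same Fourier analysis applied to $M(\kappa + it)/M(\kappa)$, gives the density $\sim 1/\sqrt{2\pi\,\Lo''(\kappa)}$ near $0$. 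Consequently the right-hand side is $\sim \Phi_{\X}(\tau)\bigl(e^{\kappa\lambda} - 1\bigr)$, and for $\lambda \leq 1/\kappa$ this is $O(\kappa\lambda)\Phi_{\X}(\tau)$, yielding \eqref{SmallShift}.
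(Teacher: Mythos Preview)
Your overall strategy---saddle-point analysis on the vertical line $\re(s)=\kappa$, Taylor expansion of $\Lo$ in a central window, and prime-by-prime decay for the tail---is precisely what the paper does. Two points of divergence deserve comment. First, you invoke the \emph{unsmoothed} inversion $\Phi_{\X}(\tau)=\frac{1}{2\pi i}\int_{\kappa-i\infty}^{\kappa+i\infty}M(s)e^{-sA}\,ds/s$, whereas the paper inserts a mollifier $\bigl((e^{\lambda s}-1)/(\lambda s)\bigr)^N$ (Lemma~\ref{SmoothPerron}). Your raw formula is only meaningful once you already know $|M(\kappa+it)|/|\kappa+it|$ is integrable in $t$, so the decay bounds must be established \emph{before} the representation can even be written down; smoothing gives absolute convergence for free and decouples these two steps, at the mild cost of a smoothing error that is later absorbed via \eqref{SmallShift}. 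Second, your stated tail bound $|M(\kappa+it)|\ll M(\kappa)\exp(-c\,t^2\Lo''(\kappa))$ coming from primes $p\asymp\kappa$ is only valid for $|t|\lesssim\kappa$: once $|t|\gg\kappa$ the angle $t\log\tfrac{p+1}{p-1}\sim 2t/p$ is no longer small for such primes, and the sub-Gaussian estimate breaks down. The paper handles this with a second regime in Lemma~\ref{DecayMoments}, using primes $p\asymp |t|$ (rather than $p\asymp\kappa$) to obtain $\exp(-b_2|t|/\log|t|)$ for $|t|>\kappa/4$. Without this your tail integral over $|t|\gg\kappa$ is not controlled.

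For \eqref{SmallShift} your tilting-plus-local-CLT argument is perfectly valid and is the natural probabilistic phrasing; the paper instead bounds $\Phi_{\X}(e^{-\lambda}\tau)-\Phi_{\X}(\tau)$ directly by the integral in \eqref{SmoothPerron2} and estimates it with the same decay lemma, avoiding any explicit density statement. The two are essentially equivalent in content (your local CLT is exactly the Fourier inversion of $M(\kappa+it)/M(\kappa)$), but the paper's route is slightly more economical since it recycles machinery already in place. Finally, a small remark on your central window: the linear term $-it/\kappa$ in the expansion of $1/(\kappa+it)$ \emph{does} cancel by parity against the even Gaussian, as the paper notes; treating it as an $O(|t|/\kappa^2)$ error is harmless for the stated precision but throws away a free cancellation.
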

In order to deduce Theorem \ref{ExponentialDecay} from this result, we need to estimate $\Lo(r)$ and its first few derivatives when $r$ is large. We prove
\begin{pro}\label{MomentRand}
For any real number $r\geq 4$ we have
\begin{equation}\label{MomentRand1}
\Lo(r)=r\left(\log\log r+\gamma+\frac{C_0-1}{\log r}+O\left(\frac{1}{(\log r)^2}\right)\right),
\end{equation}
and
\begin{equation}\label{MomentRand2}
\Lo'(r)= \log\log r +\gamma+\frac{C_0}{\log r}+ O\left(\frac{1}{(\log r)^2}\right).
\end{equation}
Moreover, for all real numbers $y, t$ such that $|y|\geq 3$ and $|t|\leq |y|$ we have
\begin{equation}\label{MomentRand3}
\Lo''(y)\asymp \frac{1}{|y|\log|y|}, \text{ and } \Lo'''(y+it)\ll \frac{1}{|y|^2\log |y|}.
\end{equation}
\end{pro}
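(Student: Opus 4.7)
The plan is to estimate $\Lo(r)=\sum_p\log E_p(r)$ by splitting the primes into three regimes according to the size of $r/p$: small primes $p\leq r/(\log r)^A$, where $(1-1/p)^{-r}$ dominates $E_p(r)$; a transition zone $r/(\log r)^A<p\leq r(\log r)^A$; and large primes $p>r(\log r)^A$, where $r/p$ is small. For the large primes, expanding $(1\pm 1/p)^{-r}$ in powers of $1/p$ and using $\alpha_p-\beta_p,\gamma_p=O(1/p)$ gives $\log E_p(r)=r^2/(2p^2)+O(r/p^3+r^3/p^4)$, whose summed contribution is absorbed into the error term. For the small primes I would factor $E_p(r)=\alpha_p(1-1/p)^{-r}(1+R_p(r))$ with $R_p(r)\ll e^{-cr/p}$, and then apply the strong form of Mertens' theorem
\[
\sum_{p\leq x}(-\log(1-1/p))=\log\log x+\gamma+O\!\left(e^{-c\sqrt{\log x}}\right),
\]
obtainable from the classical zero-free region for $\zeta$, to extract the main term $r(\log\log r+\gamma)$.

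The main obstacle, where the constant $C_0$ arises, is the transition zone. Here neither asymptotic is valid on its own, and I would use the approximation $\log E_p(r)\approx\log\cosh(r/p)+\log 2+O(1/p)$, which is valid since $\alpha_p,\beta_p=1/2+O(1/p)$ and $\gamma_p=O(1/p)$. Replacing the prime sum by an integral via the Prime Number Theorem and substituting $u=r/p$ converts the transition contribution into $(r/\log r)$ times a dimensionless integral. Integration by parts using $(\log\cosh u)'=\tanh u$, and splitting at $u=1$, then yields precisely the combination
\[
\int_0^1\frac{\tanh(t)}{t}\,dt+\int_1^\infty\frac{\tanh(t)-1}{t}\,dt=C_0,
\]
after accounting for the $-r\log(1-1/p)$ subtraction from the Mertens step, which is what produces the $-1$ in the coefficient $C_0-1$ of \eqref{MomentRand1}. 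The estimate \eqref{MomentRand2} follows from the same analysis applied term-by-term to $E_p'(r)/E_p(r)$, where the identity $(\log\cosh u)'=\tanh u$ now enters at the level of the integrand rather than after integration by parts; the shift from $(C_0-1)/\log r$ to $C_0/\log r$ is consistent with formally differentiating the main term $r\log\log r+\gamma r$, which contributes an extra $1/\log r$.

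For the size estimates in \eqref{MomentRand3}, I would differentiate $\log E_p$ and apply the same three-regime splitting. The dominant contribution to $\Lo''(y)$ comes from primes $p\asymp y$, where $E_p''(y)/E_p(y)\asymp 1/p^2$ and there are $\asymp y/\log y$ such primes, yielding $\Lo''(y)\asymp 1/(|y|\log|y|)$. The main obstacle will be the complex-variable bound on $\Lo'''(y+it)$ in the strip $|t|\leq|y|$: I will need a uniform lower bound $|E_p(y+it)|\gg E_p(y)$, which for small primes follows from $|\alpha_p(1-1/p)^{-y-it}|=\alpha_p(1-1/p)^{-y}$ dominating the other two terms in modulus, and for large primes from the Taylor expansion, which remains valid for complex $z=y+it$ with $|z|/p$ small. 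Given such uniform positivity, bounding the three derivative terms in each regime by the same prime-splitting heuristic as for $\Lo''$ produces a bound smaller by a factor of $1/|y|$, proving \eqref{MomentRand3}.
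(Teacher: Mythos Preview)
Your approach is essentially the paper's: approximate $\log E_p(r)$ by $\log\cosh(r/p)$ in the transition range, replace the prime sum by an integral via the prime number theorem, and extract $C_0$ through integration by parts with $(\log\cosh u)'=\tanh u$. The paper organizes this a bit more economically with a single split at $p=r^{2/3}$ and an auxiliary function $f(t)=\log\cosh(t)-t\cdot\mathbf{1}_{t\geq 1}$, which simultaneously handles your ``transition'' and ``large-prime'' ranges and makes the $-1$ in $C_0-1$ appear as the boundary term at $t=1^{\pm}$; your three-regime split reaches the same conclusion with slightly more bookkeeping. Two small slips to note: the $+\log 2$ in your transition approximation is spurious (the $1/2$ in $\alpha_p,\beta_p\approx 1/2$ \emph{is} the $1/2$ in the definition of $\cosh$), and your large-prime expansion omits a term $-r/p^2$ coming from $(\alpha_p-\beta_p)\sinh(r/p)$, though its sum over $p>r(\log r)^A$ is still absorbed into the error as you claim.
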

Theorem \ref{ExponentialDecay} now follows upon combining Theorem \ref{SaddlePoint} and Proposition \ref{MomentRand}.
\begin{proof}[Proof of Theorem \ref{ExponentialDecay}]
 By Theorem \ref{SaddlePoint} and equation \eqref{MomentRand3}, we have 
\begin{align*}
\Phi_{\X}(\tau)&= \frac{\ex\left(L(1, \X)^{\kappa}\right)(e^{\gamma}\tau)^{-\kappa}}{\kappa \sqrt{2\pi \Lo''(\kappa)}}\left(1+O\left(\sqrt{\frac{\log \kappa}{ \kappa}} \right)\right)\\
&= \exp\Big(\Lo(\kappa)-\kappa(\log \tau+ \gamma)+O(\log \kappa)\Big),
\end{align*}
where $\kappa$ is the unique solution to $\Lo'(\kappa)=\log \tau+\gamma$. Furthermore, by  \eqref{MomentRand2} we have
\begin{equation}\label{EstSaddle1}
\log \tau=\log\log \kappa+\frac{C_0}{\log \kappa}+ O\left(\frac{1}{(\log \kappa)^2}\right),
\end{equation}
and hence we deduce from \eqref{MomentRand1} that 
$$
\Phi_{\X}(\tau)= \exp\left(-\frac{\kappa}{\log \kappa} +O\left(\frac{\kappa}{(\log \kappa)^2}\right)\right).
$$
The estimate \eqref{AsympLargeDeviations} follows upon noting that $\log\kappa=\tau-C_0+O(1/ \tau)$ by \eqref{EstSaddle1}. Finally, using this fact together with \eqref{SmallShift} imply \eqref{Perturbations}. 

\end{proof}

The remaining of this section will be devoted to the proofs of Theorem \ref{SaddlePoint} and Proposition \ref{MomentRand}. We begin by proving the latter.\subsection{Proof of Proposition \ref{MomentRand}} 

We first need some preliminary lemmas. 
\begin{lem}\label{estimate1}  Let $r\geq 4$ be a real number. Then we have 

\begin{equation}\label{approxExLarge1}
\log E_p(r)=\begin{cases} 
-r\log(1-1/p)+O(1) & \text{ if } p\leq r^{2/3}\\
\log\cosh\left(\frac{r}{p}\right)+O\left(\frac{r}{p^2}\right) & \text{ if } p>r^{2/3}.
\end{cases}
\end{equation}
and 
\begin{equation}\label{approxExLarge2}
\frac{E'_p(r)}{E_p(r)}=\begin{cases}  -\log(1-1/p)\left(1+O\left(e^{-r^{1/3}}\right)\right)& \text{ if } p\leq r^{2/3}\\
\frac{1}{p}\tanh\left(\frac{r}{p}\right) + O\left(\frac{1}{p^2}+\frac{r}{p^3}\right) & \text{ if } p>r^{2/3}.
\end{cases}
\end{equation}

\end{lem}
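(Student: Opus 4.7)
My plan is to prove the two estimates separately in the two regimes $p\le r^{2/3}$ and $p>r^{2/3}$. The threshold is natural: below $r^{2/3}$ one has $r/p\ge r^{1/3}$, so $(1-1/p)^{-r}\ge e^{r^{1/3}}$ dominates the other two terms of $E_p(r)$ by an exponentially large factor; above $r^{2/3}$ the next-order correction $r/(2p^2)$ in the expansion $-r\log(1\mp 1/p)=\pm r/p+r/(2p^2)+O(r/p^3)$ is at most $r^{-1/3}$ and hence small, so that $u:=(1-1/p)^{-r}=e^{r/p}(1+O(r/p^2))$ and $v:=(1+1/p)^{-r}=e^{-r/p}(1+O(r/p^2))$.

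For $p\le r^{2/3}$ I would first verify directly from \eqref{ALPHAP}--\eqref{BETAP} (together with the convention $\alpha_2=1/2$) that $\alpha_p\ge 1/5$ uniformly in $p$, so $\log\alpha_p=O(1)$. Since $\beta_p v+\gamma_p\le 2$ while $\alpha_p u\ge e^{r/p}/5\ge e^{r^{1/3}}/5$, we obtain $E_p(r)=\alpha_p u(1+O(e^{-r^{1/3}}))$, and taking logs yields $\log E_p(r)=-r\log(1-1/p)+O(1)$. Differentiating gives $E'_p(r)=\alpha_p Au+\beta_p Bv$ where $A=-\log(1-1/p)$ and $B=-\log(1+1/p)$; the bound $v/u=((1-1/p)/(1+1/p))^r\le e^{-2r/p}\le e^{-2r^{1/3}}$ is exponentially small, and since $A$ and $|B|$ are comparable, a short computation gives $E'_p/E_p=A(1+O(e^{-r^{1/3}}))$.

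For $p>r^{2/3}$, substituting the expansions of $u$ and $v$ into $E_p(r)=\alpha_pu+\beta_pv+\gamma_p$ and using $\alpha_p+\beta_p=1-\gamma_p$ together with $\alpha_p-\beta_p=-1/p+O(1/p^3)$ (a consequence of the Jacobsthal identity \eqref{Jacobsthal}), the symmetric/antisymmetric decomposition produces
\[
E_p(r) = (1-\gamma_p)\cosh(r/p) - \frac{\sinh(r/p)}{p} + \gamma_p + O\!\left(\frac{r\cosh(r/p)}{p^2}\right),
\]
which I would factor as $\cosh(r/p)\bigl[1-\gamma_p(1-1/\cosh(r/p))-\tanh(r/p)/p\bigr]+O(r\cosh(r/p)/p^2)$. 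The main (but still routine) obstacle is verifying $\gamma_p(1-1/\cosh(r/p))+\tanh(r/p)/p=O(r/p^2)$; to handle this uniformly I would use $\tanh(r/p)\le\min(r/p,1)$ (which forces $\tanh(r/p)/p\le r/p^2$ in both regimes $p\gtrless r$) together with $1-1/\cosh(r/p)=O(\min((r/p)^2,1))$, which combined with $\gamma_p=O(1/p)$ yields $\gamma_p(1-1/\cosh(r/p))=O(\min(r^2/p^3,1/p))\le O(r/p^2)$ (the bound $\min(a,b)\le\sqrt{ab}$ with $a=1/p$ and $b=r^2/p^3$ gives exactly $r/p^2$). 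This proves the first estimate. The second estimate follows by a parallel computation: starting from $E'_p(r)=\alpha_p Au+\beta_p Bv$ with $A=1/p+O(1/p^2)$ and $B=-1/p+O(1/p^2)$, I would isolate the leading contribution as $(1/p)(\alpha_pu-\beta_pv)$ plus $O(\cosh(r/p)/p^2)$, and then apply the same symmetric/antisymmetric decomposition to obtain $E'_p(r)=\sinh(r/p)/p+O(\cosh(r/p)(1/p^2+r/p^3))$; dividing by $E_p(r)=\cosh(r/p)(1+O(r/p^2))$ produces $E'_p/E_p=\tanh(r/p)/p+O(1/p^2+r/p^3)$.
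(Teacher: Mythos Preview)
Your proposal is correct and follows essentially the same approach as the paper: for $p\le r^{2/3}$ one isolates the dominant term $\alpha_p(1-1/p)^{-r}$, and for $p>r^{2/3}$ one expands $(1\mp 1/p)^{-r}=e^{\pm r/p}(1+O(r/p^2))$ and uses the symmetric/antisymmetric decomposition together with $\alpha_p-\beta_p\ll 1/p$, $\gamma_p\ll 1/p$. The only cosmetic difference is that where you invoke $\min(a,b)\le\sqrt{ab}$ to bound $\gamma_p(1-1/\cosh(r/p))$, the paper uses the single inequality $\cosh(t)-1\ll t\cosh(t)$ (equivalently $1-1/\cosh(t)\ll t$), which gives $\gamma_p(1-1/\cosh(r/p))\ll (1/p)(r/p)=r/p^2$ in one step.
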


\begin{proof}
We start by proving \eqref{approxExLarge1}. 
First, if $p<r^{2/3}$ then 
\begin{equation}\label{localsmallp}
E_p(r)= \alpha_p \left(1-\frac{1}{p}\right)^{-r}
\Big(1+O\big(\exp\left(-r^{1/3}\right)\big)\Big),
\end{equation}
from which the desired estimate follows in this case. 

Now if $p>r^{2/3}$, we use that $\alpha_p-\beta_p\ll 1/p$ and $\gamma_p\ll 1/p$, together with the bounds $\cosh(t)-1 \ll  t\cosh(t)$ and $\sinh(t)\ll t\cosh(t)$, which are valid for all $t\geq 0$. Thus we derive
\begin{equation}\label{locallargep}
\begin{aligned}
E_p(r)
&= \left(\alpha_p e^{r/p}+\beta_p e^{-r/p}\right)\left(1+O\left(\frac{r}{p^2}\right)\right)+\gamma_p \\
&= (\alpha_p+\beta_p)\cosh\left(\frac{r}{p}\right) \left(1+O\left(\frac{r}{p^2}\right)\right)+\gamma_p\\
&= \cosh\left(\frac{r}{p}\right) \left(1+O\left(\frac{r}{p^2}\right)\right).
\end{aligned}
\end{equation}
which completes the proof of \eqref{approxExLarge1}.

Next, by \eqref{expectation} we have 
$$ E_p'(r)= 
-\alpha_p  \left(1-\frac{1}{p}\right)^{-r} \log \left(1-\frac{1}{p}\right)- \beta_p  \left(1+\frac{1}{p}\right)^{-r}\log \left(1+\frac{1}{p}\right).
$$
For $p<r^{2/3}$ the desired estimate for $E'_p(r)/E_p(r)$ follows from \eqref{localsmallp}. On the other hand, if $p>r^{2/3}$ then  
\begin{align*} 
E_p'(r)
&= \left(\frac{\alpha_p}{p} e^{r/p}-\frac{\beta_p}{p} e^{-r/p}\right)\left(1+O\left(\frac{1}{p}+\frac{r}{p^2}\right)\right)\\
&= \frac{1}{p}\sinh\left(\frac{r}{p}\right)\left(1+O\left(\frac{1}{p}+\frac{r}{p^2}\right)\right)+O\left(\frac{1}{p^2} \cosh\left(\frac{r}{p}\right)\right),
\end{align*}
since both $\alpha_p$ and $\beta_p$ equal $1/2+O(1/p)$. Combining this estimate with \eqref{locallargep} completes the proof.
\end{proof}
Define 
$$ f(t):= \begin{cases} \log \cosh(t) & \text{ if } 0\leq t <1 \\
 \log \cosh(t)- t  & \text{ if } t \geq 1.\end{cases}
$$
Then we have the following standard estimates for $f$ and $f'$.
\begin{lem}[Lemma 4.5 of \cite{La2}]\label{logcosh}  $f$ is bounded on $[0,\infty)$ and $f(t)=t^2/2+O(t^4)$ if $0\leq t <1.$ Moreover we have 
$$
 f'(t)=\begin{cases}  t +O(t^2) & \text{ if } 0< t<1 \\
  O(e^{-2t}) & \text{ if } t > 1.\end{cases}
$$
\end{lem}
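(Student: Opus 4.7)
The proof reduces to two completely separate asymptotic regimes, and in each regime I would rely on a standard Taylor / exponential expansion of $\cosh$.

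For the small-$t$ regime ($0 \le t < 1$), I would use the Taylor series
$\cosh(t) = 1 + t^2/2 + t^4/24 + O(t^6)$
and $\log(1+u) = u - u^2/2 + O(u^3)$, which together give
$\log\cosh(t) = t^2/2 - t^4/12 + O(t^6) = t^2/2 + O(t^4)$,
establishing the second claim and, incidentally, the boundedness of $f$ on $[0,1]$. Differentiating, $f'(t) = \tanh(t)$, and the Taylor expansion $\tanh(t) = t - t^3/3 + O(t^5)$ yields $f'(t) = t + O(t^2)$ (even $O(t^3)$) on this range.

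For the large-$t$ regime ($t \ge 1$), the key observation is the identity
$\cosh(t) = \tfrac{1}{2} e^{t}(1 + e^{-2t})$,
so that
$\log\cosh(t) - t = -\log 2 + \log(1+e^{-2t}) = -\log 2 + O(e^{-2t})$.
This gives both the boundedness of $f$ on $[1,\infty)$ and hence on $[0,\infty)$ (combining with the previous case). For the derivative, I would write
$f'(t) = \tanh(t) - 1 = \frac{\sinh(t)-\cosh(t)}{\cosh(t)} = \frac{-e^{-t}}{\cosh(t)} = \frac{-2}{e^{2t}+1} = O(e^{-2t})$,
which is the second derivative bound.

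There is no real obstacle here: every estimate is a one-line consequence of the Taylor series of $\cosh$ at $0$ or of the factorization $\cosh(t) = \tfrac{1}{2}e^{t}(1+e^{-2t})$ at infinity. The only minor point worth a sentence is that $f$ is \emph{defined} piecewise so that the large-$t$ branch is bounded; its jump at $t=1$ is irrelevant to the three claims, which only assert pointwise bounds, not continuity.
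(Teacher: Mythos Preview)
Your proof is correct. Note that the paper does not actually prove this lemma: it is quoted verbatim as Lemma~4.5 of \cite{La2} and used as a black box, so there is no in-paper argument to compare against. Your Taylor-series treatment near $0$ and the factorization $\cosh(t)=\tfrac12 e^t(1+e^{-2t})$ near infinity are exactly the standard way to verify such estimates, and every step is valid.
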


We are now ready to prove Proposition \ref{MomentRand}. 
\begin{proof}[Proof of Proposition \ref{MomentRand}]
We only prove \eqref{MomentRand1} and \eqref{MomentRand2} since \eqref{MomentRand3}  follows along the same lines.
First, by Lemmas \ref{estimate1} and \ref{logcosh} we have
\begin{equation}\label{SumPrimesEstimate}
\begin{aligned}
\Lo(r)
&=-r\sum_{p\leq r^{2/3}}\log\left(1-\frac{1}{p}\right)+\sum_{p>r^{2/3}}\log\cosh\left(\frac{r}{p}\right)+ O\left(r^{2/3}\right)\\
&=-r\sum_{p\leq r}\log\left(1-\frac{1}{p}\right)+\sum_{r^{2/3}<p<r^{4/3}} f\left(\frac{r}{p}\right)+ O\left(r^{2/3}\right).
\end{aligned}
\end{equation}
Now, using the prime number theorem in the form $\pi(t)-\text{Li}(t)\ll t/(\log t)^3$, together with partial summation and Lemma \ref{logcosh}, we obtain
\begin{equation}\label{PartialSummation1}
\begin{aligned}
\sum_{r^{2/3}<p<r^{4/3}}f\left(\frac{r}{p}\right) &=\int_{r^{2/3}}^{r^{4/3}} f\left(\frac{r}{t}\right)\frac{dt}{\log t}+O\left(\frac{r}{(\log r)^2}\right)\\
&=\frac{r}{\log r}\int_{r^{-1/3}}^{r^{1/3}}\frac{f(u)}{u^2}du+O\left(\frac{r}{(\log r)^2}\right),
\end{aligned}
\end{equation}
since $\int_0^{\infty}(f(u)(\log u)/u^2)du<\infty$. Extending the integral in the right hand side of this estimate gives
\begin{equation}\label{PartialSummation2}
 \int_{r^{-1/3}}^{r^{1/3}}\frac{f(u)}{u^2}du= \int_0^{\infty} \frac{f(u)}{u^2}du +O\left(r^{-1/3}\right).
\end{equation}
by Lemma \ref{logcosh}. Finally, by a easy integration by parts along with Lemma \ref{logcosh} we have
\begin{align*}
\int_0^{\infty}\frac{f(u)}{u^2}du
&= \int_0^{\infty} \frac{f'(u)}{u}du-\left(\lim_{x\to 1^-}\frac{f(x)}{x}- \lim_{x\to 0^+}\frac{f(x)}{x}+ \lim_{x\to\infty} \frac{f(x)}{x}- \lim_{x\to 1^+}\frac{f(x)}{x}\right)\\
&= \int_0^{\infty} \frac{f'(u)}{u}du-1.
\end{align*}
Collecting the above estimates yields \eqref{MomentRand1}.

Next, we prove \eqref{MomentRand2}. First, similarly to \eqref{SumPrimesEstimate}, we derive from equation \eqref{approxExLarge2} and Lemma \ref{logcosh} that
\begin{align*}
\Lo'(r)&
=  -\sum_{p\leq r^{2/3}}\log\left(1-\frac{1}{p}\right)+\sum_{r^{2/3}<p} \frac{1}{p}\tanh\left(\frac{r}{p}\right)+O\left(r^{-1/3}\right)\\
&=  -\sum_{p\leq r}\log\left(1-\frac{1}{p}\right)+\sum_{r^{2/3}<p<r^{4/3}} \frac{1}{p}f'\left(\frac{r}{p}\right)+O\left(r^{-1/3}\right).
\end{align*}
Finally, using the prime number theorem and partial summation as in \eqref{PartialSummation1} and \eqref{PartialSummation2}, one can see that 
$$
\sum_{r^{2/3}<p<r^{4/3}} \frac{1}{p}f'\left(\frac{r}{p}\right)=\frac{1}{\log r}\int_0^{\infty}\frac{f'(u)}{u}du+O\left(\frac{1}{(\log r)^2}\right),
$$
from which the estimate \eqref{MomentRand2} follows.

\end{proof}

\subsection{Proof of Theorem \ref{SaddlePoint}}
One of the key ingredients in the proof of Theorem \ref{SaddlePoint} is to show that $\ex\left(L(1,\X)^{r+it}\right)/\ex\left(L(1,\X)^{r}\right)$ is rapidly decreasing in $t$ in the range $|t|\geq \sqrt{r\log r}$. To this end, we establish the following lemma, which is the analogue of Lemma 3.2 of \cite{GrSo2}.

\begin{lem}\label{DecayLocal}
Let $r$ be large. If $p>r/4$, then for some positive constant $b_1$ we have 
$$ \frac{|E_p(r+it)|}{E_p(r)}\leq 
\exp\left(-b_1\left(1-\cos \left(t \log\left(\frac{p+1}{p-1}\right)\right)\right)\right).
$$
\end{lem}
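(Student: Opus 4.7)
The plan is to expand $|E_p(r+it)|^2$ directly and identify the cosine term that appears in the statement as the interference between the $(1-1/p)^{-z}$ and $(1+1/p)^{-z}$ contributions. Concretely, writing $a=(1-1/p)^{-r}$ and $b=(1+1/p)^{-r}$, one has
\begin{equation*}
E_p(r+it)=\alpha_p\, a\, e^{i\theta_1}+\beta_p\, b\, e^{i\theta_2}+\gamma_p,
\end{equation*}
where $\theta_1=t\log(p/(p-1))$ and $\theta_2=-t\log((p+1)/p)$, so that $\theta_1-\theta_2=t\log((p+1)/(p-1))$ is exactly the argument of the cosine in the lemma. Multiplying by the conjugate and subtracting from $E_p(r)^2=(\alpha_p a+\beta_p b+\gamma_p)^2$ gives
\begin{equation*}
E_p(r)^2-|E_p(r+it)|^2=2\alpha_p\beta_p\, ab\bigl(1-\cos(\theta_1-\theta_2)\bigr)+2\alpha_p a\gamma_p(1-\cos\theta_1)+2\beta_p b\gamma_p(1-\cos\theta_2).
\end{equation*}
All three terms are non-negative, so dropping the last two yields the clean lower bound
\begin{equation*}
E_p(r)^2-|E_p(r+it)|^2\;\geq\; 2\alpha_p\beta_p\, ab\bigl(1-\cos(\theta_1-\theta_2)\bigr).
\end{equation*}

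Next, I would show that the ratio $\alpha_p\beta_p\, ab/E_p(r)^2$ is bounded below by an absolute positive constant $c_0$ whenever $p>r/4$ and $p$ is larger than some fixed $p_0$. The hypothesis $p>r/4$ forces $r/p<4$, so $a\leq e^{4+O(1/p)}$ and $b\geq e^{-4}$, hence $ab$ is bounded above and below by absolute constants; similarly $E_p(r)\leq a+b+\gamma_p=O(1)$. From the definitions \eqref{ALPHAP}, \eqref{BETAP} we have $\alpha_p,\beta_p=1/2+O(1/p)$, so $\alpha_p\beta_p\geq 1/16$ once $p_0$ is chosen appropriately. For the finitely many primes $p\leq p_0$ (which only matters when $r$ is not large enough to force $p>p_0$), we can enlarge $b_1$ so that the lemma remains valid for these primes, or simply invoke that the lemma only claims $r$ large. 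Combining these estimates gives
\begin{equation*}
\frac{|E_p(r+it)|^2}{E_p(r)^2}\;\leq\; 1-2c_0\bigl(1-\cos(\theta_1-\theta_2)\bigr).
\end{equation*}

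Since $1-\cos(\cdot)\leq 2$ and $c_0$ is a small absolute constant, $2c_0(1-\cos(\theta_1-\theta_2))\leq 4c_0<1$, so we may apply the elementary inequality $\log(1-x)\leq -x$ on $[0,1)$, then halve and exponentiate, to obtain
\begin{equation*}
\frac{|E_p(r+it)|}{E_p(r)}\;\leq\;\exp\!\left(-c_0\bigl(1-\cos(\theta_1-\theta_2)\bigr)\right),
\end{equation*}
which is exactly the claimed bound with $b_1=c_0$.

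The main obstacle is not analytical but bookkeeping: correctly identifying $\theta_1-\theta_2=t\log((p+1)/(p-1))$ as the arithmetic reason that this particular cosine appears, and verifying that the denominator $E_p(r)^2$ really is controlled by $\alpha_p\beta_p\, ab$ uniformly when $p>r/4$. Everything else is a one-line application of $\log(1-x)\leq -x$.
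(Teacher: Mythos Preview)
Your proof is correct and is essentially the same argument as the paper's: the paper quotes the inequality
\[
|x_1+x_2e^{i\theta_2}+x_3e^{i\theta_3}|\le (x_1+x_2+x_3)\exp\!\left(-\frac{x_1x_3(1-\cos\theta_3)}{(x_1+x_2+x_3)^2}\right)
\]
from Lemma~3.2 of Granville--Soundararajan and applies it with $x_1=\alpha_p a$, $x_2=\gamma_p$, $x_3=\beta_p b$, which is exactly the inequality you rederive by expanding $|E_p(r+it)|^2$, dropping the two non-negative $\gamma_p$-cross-terms, and using $1-x\le e^{-x}$. The only difference is that your version is self-contained rather than citing \cite{GrSo2}; the verification that $\alpha_p\beta_p ab/E_p(r)^2\ge c_0$ when $p>r/4$ (which the paper leaves implicit in the phrase ``since $p>r/4$'') is carried out explicitly in your write-up, and your remark that ``$r$ large'' already forces $p$ large disposes of any worry about small primes.
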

\begin{proof}
Let $x_1, x_2, x_3$ be positive real number numbers, 
and $\theta_2, \theta_3$ be real numbers. We shall use the following inequality which is established in the proof of Lemma 3.2 of \cite{GrSo2}:
$$\left| x_1+x_2 e^{i\theta_2}+ x_3 e^{i\theta_3}\right|\leq 
(x_1+x_2+x_3) \exp\left(-\frac{x_1x_3(1-\cos\theta_3)}{(x_1+x_2+x_3)^2}\right).
$$  
Indeed, applying this inequality with $x_1= \alpha_p(1-1/p)^{-r}, x_2= \gamma_p, 
x_3= \beta_p(1+1/p)^{-r}$, and $\theta_2= t\log (1-1/p)$, and $\theta_3= t\log\left(\frac{p-1}{p+1}\right)$ yields the desired bound, since $p>r/4$. 
\end{proof}
Using this lemma, we deduce the following result.
\begin{lem}\label{DecayMoments} Let $r$ be large. Then, there exists a constant $b_2>0$ such that 
$$\frac{\left|\ex\left(L(1,\X)^{r+it}\right)\right|}{\ex\left(L(1,\X)^r\right)}\ll  \begin{cases}\exp\left(-b_2 \frac{t^2}{r\log r} \right) & \text{ if } |t|\leq r/4 \\
 \exp\left(-b_2\frac{|t|}{\log|t|} \right) & \text{ if } |t|> r/4.
\end{cases}
$$

\end{lem}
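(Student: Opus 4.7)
The plan is to bound the ratio
$$\frac{|\ex(L(1,\X)^{r+it})|}{\ex(L(1,\X)^{r})} = \prod_p \frac{|E_p(r+it)|}{E_p(r)}$$
by splitting the product into small and large primes. For $p\leq r/4$ the triangle inequality applied to formula \eqref{expectation} for $E_p$ (the weights $\alpha_p,\beta_p,\gamma_p$ are non-negative and $|(1\pm 1/p)^{-r-it}| = (1\pm 1/p)^{-r}$) yields $|E_p(r+it)|\leq E_p(r)$, so these factors are harmless. For $p>r/4$ I apply Lemma \ref{DecayLocal} and take logarithms, obtaining
$$\frac{|\ex(L(1,\X)^{r+it})|}{\ex(L(1,\X)^{r})} \leq \exp\bigl(-b_1 S(t,r)\bigr), \qquad S(t,r):=\sum_{p>r/4}\left(1-\cos\left(t\log\tfrac{p+1}{p-1}\right)\right).$$
The whole lemma then reduces to showing $S(t,r)\gg t^2/(r\log r)$ when $|t|\leq r/4$, and $S(t,r)\gg |t|/\log|t|$ when $|t|>r/4$.

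For the first regime I would restrict the sum to the dyadic block $p\in(r/2,r)$. Here $\log\frac{p+1}{p-1}=2/p+O(1/p^3)$, so $t\log\frac{p+1}{p-1}=2t/p+O(|t|/p^3)$; the main term lies in $[-1,1]$ and the error term is $O(1/r^2)$. The standard inequality $1-\cos\theta\gg\theta^2$ on $|\theta|\leq 2$ therefore gives a contribution of order $t^2/p^2$ for each such prime. Since $(r/2,r)$ contains $\asymp r/\log r$ primes by the prime number theorem, this yields $S(t,r)\gg (r/\log r)\cdot t^2/r^2 = t^2/(r\log r)$.

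For the second regime I would instead restrict to primes $p\in(|t|,2|t|)$. These all exceed $r/4$ (because $|t|>r/4$), so Lemma \ref{DecayLocal} applies; moreover on this block $t\log\frac{p+1}{p-1}=2t/p+O(1/|t|)$ lies in a compact subset of $(-2,-1)\cup(1,2)$, on which $1-\cos$ is bounded below by an absolute positive constant. The prime number theorem furnishes $\asymp |t|/\log|t|$ primes in this interval, whence $S(t,r)\gg |t|/\log|t|$. Taking $b_2$ to be a suitable positive multiple of $b_1$ in each regime completes the argument.

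The main obstacle is bookkeeping in the first regime: one has to check that the $O(|t|/p^3)$ error inside the cosine truly is negligible against $2t/p$ (it is, once $p>r/2$ and $|t|\leq r/4$), and that summing $1/p^2$ over the block $(r/2,r)$ loses exactly one factor of $\log r$ rather than two. Neither is delicate, but both must be handled explicitly to land on the announced shape $\exp(-b_2 t^2/(r\log r))$ with the correct power of $\log r$.
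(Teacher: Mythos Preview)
Your proposal is correct and follows essentially the same approach as the paper: bound all local factors trivially by $|E_p(z)|\le E_p(r)$, then restrict to a dyadic block of primes where Lemma~\ref{DecayLocal} gives a nontrivial saving (the paper uses $p\in(r,2r)$ and $p\in(4|t|,8|t|)$ in the two regimes, but your choices $(r/2,r)$ and $(|t|,2|t|)$ work just as well). The bookkeeping you flag is exactly what the paper handles, and your treatment of it is sound.
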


\begin{proof}
Let $z=r+it$. Since $|E_p(z)|\leq E_p(r)$ we obtain
that for any real numbers $2\leq y_1<y_2$ 
\begin{equation}\label{decay1}
\frac{\left|\ex\left(L(1,\X)^z\right)\right|}{\ex\left(L(1,\X)^r\right)}\leq \prod_{y_1\leq p\leq y_2}\frac{|E_p(z)|}{E_p(r)}.
\end{equation}
Moreover, note that $|t| \log \left(\frac{p+1}{p-1}\right)\sim 2|t|/p$, whence for $|t|\leq p/4$ we have
$$
1-\cos \left(t \log\left(\frac{p+1}{p-1}\right)\right)\gg \frac{|t|^2}{p^2}.
$$ 
If $|t|\leq r/4$ we choose $y_1= r$ and $y_2=2r$.  Then, appealing to Lemma \ref{DecayLocal} gives the desired bound in this case. Finally, in the case $|t|>r/4$, we use the same argument with $y_1= 4|t|$ and $y_2=8|t|$. 
\end{proof}
Let $\varphi(y)=1$ if $y>1$ and equals $0$ otherwise. 
To relate the distribution function of $L(1, \X)$ (or that of $L(1, \chi_d)$ over $d\in \D$) to its complex moments, we use the following smooth analogue of Perron's formula.

\begin{lem}\label{SmoothPerron}
Let $\lambda>0$ be a real number and $N$ be a positive integer. For any $c>0$ we have for $y>0$
\begin{equation}\label{SmoothPerron1}
0\leq \frac{1}{2\pi i}\int_{c-i\infty}^{c+i\infty} y^s \left(\frac{e^{\lambda s}-1}{\lambda s}\right)^N \frac{ds}{s} -\varphi(y)\leq 
\frac{1}{2\pi i}\int_{c-i\infty}^{c+i\infty} y^s \left(\frac{e^{\lambda s}-1}{\lambda s}\right)^N \frac{1-e^{-\lambda N s}}{s}ds,
\end{equation}
and 
\begin{equation}\label{SmoothPerron2}
0\leq \varphi(e^{\lambda}y)-\varphi(y)\leq \frac{1}{2\pi i}\int_{c-i\infty}^{c+i\infty} y^s \left(\frac{e^{\lambda s}-1}{\lambda s}\right) \frac{e^{\lambda s}-e^{-\lambda s}}{s}ds.
\end{equation}
\end{lem}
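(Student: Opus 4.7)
The plan is to recognize both displayed kernels as bilateral Laplace transforms of explicit non-negative compactly supported densities, to interchange the order of integration with the inverse Mellin, and then to read off the inequalities by an elementary pointwise comparison against the step function $\varphi$. The key identities are
$$\left(\frac{e^{\lambda s}-1}{\lambda s}\right)^N = \int_0^{N\lambda} g_N(u)\,e^{us}\,du \quad\text{and}\quad \frac{e^{\lambda s}-e^{-\lambda s}}{s} = \int_{-\lambda}^{\lambda} e^{us}\,du,$$
where $g_N$ denotes the $N$-fold convolution of the uniform density $\lambda^{-1}\mathbf{1}_{[0,\lambda]}$, itself a probability density supported on $[0,N\lambda]$. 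Since $\bigl|\frac{e^{\lambda s}-1}{\lambda s}\bigr|\ll 1/|s|$ on the line $\re(s)=c$, each of the $s$-integrands appearing in \eqref{SmoothPerron1}--\eqref{SmoothPerron2} decays as $O(|s|^{-N-1})$ or $O(|s|^{-2})$ and is absolutely convergent.

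For \eqref{SmoothPerron1}, I would truncate the $s$-integral to $|\im s|\leq T$, apply Fubini on the bounded rectangle $[0,N\lambda]\times[c-iT,c+iT]$, and let $T\to\infty$ using the quantitative Perron estimate $\frac{1}{2\pi i}\int_{c-iT}^{c+iT} z^s/s\,ds = \varphi(z)+O(z^c/(T|\log z|))$ together with $L^\infty$-boundedness of $g_N$ (so that the single bad point $u=-\log y$, where $\varphi$ jumps, contributes negligibly). This identifies
$$J_1(y) := \frac{1}{2\pi i}\int_{c-i\infty}^{c+i\infty} y^s\left(\frac{e^{\lambda s}-1}{\lambda s}\right)^N\frac{ds}{s} = \int_0^{N\lambda} g_N(u)\,\varphi(ye^u)\,du.$$
Since $\varphi$ is non-decreasing and $g_N$ integrates to $1$, the lower bound $J_1(y)\geq\varphi(y)$ is immediate. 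For the upper bound, the splitting $\frac{1-e^{-N\lambda s}}{s}=\frac{1}{s}-\frac{e^{-N\lambda s}}{s}$ converts the right-hand integral of \eqref{SmoothPerron1} into $J_1(y)-J_1(ye^{-N\lambda})$, so it suffices to verify $J_1(ye^{-N\lambda})\leq\varphi(y)$. This is a two-case check: if $y\leq 1$ then $ye^{u-N\lambda}\leq 1$ for every $u\in[0,N\lambda]$, forcing $J_1(ye^{-N\lambda})=0=\varphi(y)$; if $y>1$ then the trivial bound $J_1\leq 1=\varphi(y)$ suffices.

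For \eqref{SmoothPerron2}, splitting $\frac{e^{\lambda s}-e^{-\lambda s}}{s}=\frac{e^{\lambda s}}{s}-\frac{e^{-\lambda s}}{s}$ and applying the $N=1$ instance of the same identification to each piece shows that the left-hand integral equals $J_1^*(ye^\lambda)-J_1^*(ye^{-\lambda})$, where $J_1^*(z)=\min\bigl(1,\max(0,1+\lambda^{-1}\log z)\bigr)$ is the piecewise linear function taking values in $[0,1]$. An elementary computation then shows that $J_1^*(ye^\lambda)-J_1^*(ye^{-\lambda})$ is a non-negative trapezoidal function of $\log y$, supported on $[-2\lambda,\lambda]$ and identically equal to $1$ on $[-\lambda,0]$. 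The sandwich $0\leq\varphi(e^\lambda y)-\varphi(y)\leq J_1^*(ye^\lambda)-J_1^*(ye^{-\lambda})$ follows by a three-case inspection on $y\leq e^{-\lambda}$, $e^{-\lambda}<y\leq 1$, $y>1$, in which the middle term takes the values $0,1,0$ respectively, each dominated by the corresponding value of the trapezoid.

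The main obstacle is the interchange of the $u$- and $s$-integrals in the identification of $J_1$: the integrand $|y^sg_N(u)e^{us}/s|$ is not jointly integrable on $[0,N\lambda]\times(c-i\infty,c+i\infty)$ because the $s$-integral alone diverges for each fixed $u$. This is why I would pass through the truncated contour $|\im s|\leq T$ and rely on the quantitative Perron estimate and boundedness of $g_N$, rather than invoking Fubini directly.
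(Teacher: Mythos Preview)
Your proposal is correct and follows essentially the same route as the paper: both expand $\bigl(\frac{e^{\lambda s}-1}{\lambda s}\bigr)^N$ as an integral over $[0,\lambda]^N$ (you package the $N$-fold iterated integral as a single convolution density $g_N$), swap with the $s$-integral, apply Perron's formula to obtain the piecewise description \eqref{SmoothPerron3}, and then read off both inequalities by shifting $y$; your treatment is in fact more careful than the paper's about the Fubini step, which the paper performs without comment.
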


\begin{proof}
For any $y>0$ we have 
$$\frac{1}{2\pi i}\int_{c-i\infty}^{c+i\infty} y^s \left(\frac{e^{\lambda s}-1}{\lambda s}\right)^N\frac{ds}{s}
= \frac{1}{\lambda^N}\int_{0}^{\lambda}\cdots \int_0^{\lambda} \frac{1}{2\pi i} \int_{c-i\infty}^{c+i\infty}
\left(ye^{t_1+ \cdots+ t_N}\right)^s\frac{ds}{s} dt_1\cdots dt_N
$$
so that by Perron's formula we obtain
\begin{equation}\label{SmoothPerron3}
\frac{1}{2\pi i}\int_{c-i\infty}^{c+i\infty} y^s \left(\frac{e^{\lambda s}-1}{\lambda s}\right)^N\frac{ds}{s}
= \begin{cases} = 1 & \text{ if } y\geq 1, \\ \in [0,1] & \text{ if } e^{-\lambda N } \leq y < 1,\\
 =0 & \text{ if } 0<y< e^{-\lambda N }. \end{cases}
\end{equation}
Therefore we deduce that 
$$
 \frac{1}{2\pi i}\int_{c-i\infty}^{c+i\infty} y^s e^{-\lambda N s} \left(\frac{e^{\lambda s}-1}{\lambda s}\right)^N \frac{ds}{s} \leq \varphi(y)\leq \frac{1}{2\pi i}\int_{c-i\infty}^{c+i\infty} y^s \left(\frac{e^{\lambda s}-1}{\lambda s}\right)^N \frac{ds}{s},
$$
which implies \eqref{SmoothPerron1}. Using these bounds for  $\varphi(y)$ and $\varphi(e^{\lambda}y)$ with $N=1$ gives \eqref{SmoothPerron2}.

\end{proof}

\begin{proof}[Proof of Theorem \ref{SaddlePoint}]

We start by proving \eqref{MainSaddle}. Let $0<\lambda<1/(2\kappa)$ be a real number to be chosen later. Using  \eqref{SmoothPerron1} with $N=1$ we obtain
\begin{equation}\label{approximation1}
\begin{aligned}
0&\leq \frac{1}{2\pi i}\int_{\kappa-i\infty}^{\kappa+i\infty}\ex\left(L(1,\X)^s\right) (e^{\gamma}\tau)^{-s}\frac{e^{\lambda s}-1}{\lambda s}\frac{ds}{s}-\Phi_{\X}(\tau)\\
&\leq \frac{1}{2\pi i}\int_{\kappa-i\infty}^{\kappa+i\infty} \ex\left(L(1,\X)^s\right)(e^{\gamma}\tau)^{-s} \frac{\left(e^{\lambda s}-1\right)}{\lambda s} \frac{\left(1-e^{-\lambda s}\right)}{s}ds.
\end{aligned}
\end{equation}
Since $\lambda\kappa<1/2$ we have
$|e^{\lambda s}-1|\leq 3 \text{ and } |e^{-\lambda s}-1|\leq 2$. 
Hence, using Lemma \ref{DecayMoments} together with the fact that $|\ex\left(L(1, \X)^s\right)|\leq \ex\left(L(1, \X)^{\kappa}\right)$, we obtain for some constant $b_3>0$
\begin{equation}\label{error12}
 \int_{\kappa-i\infty}^{\kappa-i\kappa^{3/5}}+ \int_{\kappa+i\kappa^{3/5}}^{\kappa+i\infty} \ex\left(L(1,\X)^s\right)(e^{\gamma}\tau)^{-s} \frac{e^{\lambda s}-1}{\lambda s}\frac{ds}{s} \ll \frac{e^{-b_3\kappa^{1/6}}}{\lambda \kappa^{3/5}}  \ex\left(L(1,\X)^{\kappa}\right)(e^{\gamma}\tau)^{-\kappa} ,
\end{equation}
and similarly
\begin{equation}\label{error2}
\begin{aligned}
\int_{\kappa-i\infty}^{\kappa-i\kappa^{3/5}}+ \int_{\kappa+i\kappa^{3/5}}^{\kappa+i\infty} &\ex\left(L(1,\X)^s\right)(e^{\gamma}\tau)^{-s}  \frac{\left(e^{\lambda s}-1\right)}{\lambda s} \frac{\left(1-e^{-\lambda s}\right)}{s}ds\\
& \ll \frac{e^{-b_3\kappa^{1/6}}}{\lambda \kappa^{3/5}}  \ex\left(L(1,\X)^{\kappa}\right)(e^{\gamma}\tau)^{-\kappa}.
\end{aligned}
\end{equation}
Let $s=\kappa+it$. If $|t|\leq \kappa^{3/5}$ then $\left|(1-e^{-\lambda s})(e^{\lambda s}-1)\right|\ll \lambda^2|s|^2$, and hence we get
$$
\int_{\kappa-i\kappa^{3/5}}^{\kappa+i\kappa^{3/5}} \ex\left(L(1,\X)^{s}\right)(e^{\gamma}\tau)^{-s}  \frac{\left(e^{\lambda s}-1\right)}{\lambda s} \frac{\left(1-e^{-\lambda s}\right)}{s}ds \ll \lambda\kappa^{3/5} \cdot \ex\left(L(1,\X)^{\kappa}\right)(e^{\gamma}\tau)^{-\kappa} .
$$ 
Therefore, combining this estimate with equations \eqref{approximation1}, \eqref{error12} and \eqref{error2} we deduce that
\begin{equation}\label{approximation2}
\begin{aligned}
\Phi_{\X}(\tau) - &\frac{1}{2\pi i}\int_{\kappa-i\kappa^{3/5}}^{\kappa+i\kappa^{3/5}}\ex\left(L(1,\X)^{s}\right)(e^{\gamma}\tau)^{-s} \frac{e^{\lambda s}-1}{\lambda s^2} ds\\
& \ll \left(\lambda\kappa^{3/5}+\frac{e^{-b_3\kappa^{1/6}}}{\lambda \kappa^{3/5}}\right)\ex\left(L(1,\X)^{\kappa}\right)(e^{\gamma}\tau)^{-\kappa}.
\end{aligned}
\end{equation}
On the other hand, it follows from equation \eqref{MomentRand3} that for $|t|\leq \kappa^{3/5}$ we have 
$$ 
\Lo(\kappa+it)= \Lo(\kappa)+it \Lo'(\kappa)-\frac{t^2}{2}\Lo''(\kappa)+ O\left(\frac{|t|^3}{\kappa^2\log \kappa}\right).
$$
Also, note that
$$ \frac{e^{\lambda s}-1}{\lambda s^2}=\frac{1}{s}\big(1+O(\lambda \kappa)\big)= \frac{1}{\kappa}\left(1-i\frac{t}{\kappa}+ O\left(\lambda \kappa+\frac{t^2}{\kappa^2}\right)\right).$$
Hence, using that $\ex\left(L(1,\X)^{s}\right) =\exp(\Lo(s))$ and $\Lo'(\kappa)=\log \tau+\gamma$ we obtain 
\begin{align*}
&\ex\left(L(1,\X)^{s}\right)(e^{\gamma}\tau)^{-s} \frac{e^{\lambda s}-1}{\lambda s^2}\\
= &\frac{1}{\kappa}\ex\left(L(1,\X)^{\kappa}\right)(e^{\gamma}\tau)^{-\kappa} \exp\left(-\frac{t^2}{2}\Lo''(\kappa)\right) 
\left(1-i\frac{t}{\kappa}+O\left(\lambda\kappa+ \frac{t^2}{\kappa^2}+ \frac{|t|^3}{\kappa^2\log \kappa}\right)\right).\\
\end{align*}
Thus, we get
\begin{equation}\label{TaylorSaddle}
\begin{aligned}
&\frac{1}{2\pi i}\int_{\kappa-i\kappa^{3/5}}^{\kappa+i\kappa^{3/5}}\ex\left(L(1,\X)^{s}\right)(e^{\gamma}\tau)^{-s} \frac{e^{\lambda s}-1}{\lambda s^2} ds\\
=& \frac{1}{\kappa}\ex\left(L(1,\X)^{\kappa}\right)(e^{\gamma}\tau)^{-\kappa} \frac{1}{2\pi} \int_{-\kappa^{3/5}}^{\kappa^{3/5}}\exp\left(-\frac{t^2}{2}\Lo''(\kappa)\right)
\left(1+ O\left(\lambda\kappa+ \frac{t^2}{\kappa^2}+ \frac{|t|^3}{\kappa^2\log \kappa}\right)\right)dt
\end{aligned}
\end{equation}
since the integral involving $it/{\kappa}$ vanishes. Further, since $\Lo''(\kappa)\asymp 1/(\kappa\log \kappa)$ by \eqref{MomentRand3}, then we have for some constant $b_4>0$
$$ 
\frac{1}{2\pi} \int_{-\kappa^{3/5}}^{\kappa^{3/5}}\exp\left(-\frac{t^2}{2}\Lo''(\kappa)\right)dt= \frac{1}{\sqrt{2\pi \Lo''(\kappa)}}\left(1+O\left(e^{-b_4\kappa^{1/6}}\right)\right),
$$
and 
$$ \int_{-\kappa^{3/5}}^{\kappa^{3/5}}|t|^n\exp\left(-\frac{t^2}{2}\Lo''(\kappa)\right)dt\leq  \int_{-\infty}^{\infty}|t|^n\exp\left(-\frac{t^2}{2}\Lo''(\kappa)\right)dt\ll  \frac{1}{\Lo''(\kappa)^{(n+1)/2}}\ll \frac{(\kappa \log \kappa)^{n/2}}{\sqrt{\Lo''(\kappa)}}.$$
Inserting these estimates in \eqref{TaylorSaddle} we deduce that
\begin{equation}\label{main}
\begin{aligned}
&\frac{1}{2\pi i}\int_{\kappa-i\kappa^{3/5}}^{\kappa+i\kappa^{3/5}}\ex\left(L(1,\X)^{s}\right)(e^{\gamma}\tau)^{-s} \frac{e^{\lambda s}-1}{\lambda s^2} ds\\
=& \frac{\ex\left(L(1,\X)^{\kappa}\right)(e^{\gamma}\tau)^{-\kappa}}{\kappa\sqrt{2\pi \Lo''(\kappa)}}
\left(1+ O\left(\lambda\kappa+ \sqrt{\frac{\log \kappa}{\kappa}}\right)\right).
\end{aligned}
\end{equation}
Finally,  combining the estimates \eqref{approximation2} and \eqref{main} and choosing $\lambda= \kappa^{-2}$ completes the proof of \eqref{MainSaddle}.

We now prove \eqref{SmallShift}. Let $0\leq \lambda\leq 1/\kappa$ be a real number. Then, by \eqref{SmoothPerron2} we have
$$ \Phi_{\X}(e^{-\lambda} \tau)-\Phi_{\X}(\tau) \leq 
\frac{1}{2\pi i}\int_{\kappa-i\infty}^{\kappa+i\infty} \ex\left(L(1,\X)^s\right)(e^{\gamma}\tau)^{-s} \frac{\left(e^{\lambda s}-1\right)}{\lambda s} \frac{\left(e^{\lambda s}-e^{-\lambda s}\right)}{s}ds.
$$
We write $s=\kappa+it$, and split the above integral into two parts $|t|\leq \sqrt{\kappa\log\kappa}$ and $|t|> \sqrt{\kappa\log\kappa}$.

Note that both $|(e^{\lambda s}-1)/\lambda s|$ and $|(e^{\lambda s}-e^{-\lambda s})/\lambda s|$ are always less than $4$, which is easily seen by looking at the cases $|\lambda s|\leq 1$ and $|\lambda s|>1$ separately. Therefore, it follows that the contribution of the first part is $\ll \lambda \sqrt{\kappa\log\kappa} \cdot \ex\left(L(1,\X)^{\kappa}\right)(e^{\gamma}\tau)^{-\kappa}$. Furthermore, by Lemma \ref{DecayMoments} we obtain that the contribution of the second part is 
\begin{align*}
&\ll \lambda \ex\left(L(1,\X)^{\kappa}\right)(e^{\gamma}\tau)^{-\kappa}\left(\int_{\sqrt{\kappa\log\kappa}< |t|\leq \kappa/4} e^{-b_2 t^2/(\kappa\log\kappa)} dt+ \int_{|t|\geq \kappa/4} e^{-b_2 |t|/\log |t|} dt\right)\\
& \ll \lambda \sqrt{\kappa\log\kappa} \cdot \ex\left(L(1,\X)^{\kappa}\right)(e^{\gamma}\tau)^{-\kappa}.
\end{align*}
The desired bound follows from \eqref{MainSaddle} and \eqref{MomentRand3}, which show that
\begin{equation}\label{order}
 \Phi_{\X}(\tau)\asymp\frac{\ex\left(L(1, \X)^{\kappa}\right)(e^{\gamma}\tau)^{-\kappa}}{\kappa\sqrt{\Lo''(\kappa)}}
\asymp\sqrt{\frac{\log \kappa}{\kappa}}\cdot \ex\left(L(1, \X)^{\kappa}\right)(e^{\gamma}\tau)^{-\kappa}.
\end{equation}

\end{proof}

\section{The distribution of values of $L(1,\chi_d)$ over $d\in \D$: Proof of Theorems \ref{MainResult} and \ref{TheoremDistribution}}\label{Sec5}

We shall first prove Theorem \ref{TheoremDistribution} and then deduce Theorem \ref{MainResult}. To shorten our notation we let
$$\pr_x(L(1,\chi_d) \in S):=\frac{1}{|\D|}\big|\{d\in \D: L(1,\chi_d) \in S \}\big|,$$
and 
$$M_x(z):= \frac{1}{|\D|}\sumst_{d\in \D} L(1,\chi_d)^z,$$
where as before $\sumst$ indicates that the sum is over non-exceptional discriminants $d$.

\begin{proof}[Proof of Theorem \ref{TheoremDistribution}]
As in Section \ref{Sec4}, let $\kappa$ be the unique solution to $\Lo'(r)=\log\tau+\gamma$. Let $N$ be a positive integer and  $0<\lambda<\min\{1/(2\kappa), 1/N\}$ be a real number to be chosen later. 

Let $Y=b\log x/(\log_2 x\log_3 x)$, for some suitably small constant $b>0$. If $x$ is large enough then equation \eqref{EstSaddle1} insures that $\kappa\leq Y$ in our range of $\tau$. Also, note that Theorem \ref{ComplexMoments} holds for all complex numbers $s=\kappa+it$ with $|t|\leq Y$. 
We consider the integrals
$$
J(\tau)= \frac{1}{2\pi i}\int_{\kappa-i\infty}^{\kappa+i\infty}\ex\left(L(1, \X)^s\right)(e^{\gamma}\tau)^{-s}\left(\frac{e^{\lambda s}-1}{\lambda s}\right)^N\frac{ds}{s}
$$
and 
$$ 
J_x(\tau)= \frac{1}{2\pi i}\int_{\kappa-i\infty}^{\kappa+i\infty}M_x(s)(e^{\gamma}\tau)^{-s}\left(\frac{e^{\lambda s}-1}{\lambda s}\right)^N\frac{ds}{s}.
$$
 Then, it follows from Lemma \ref{SmoothPerron} that
\begin{equation}\label{Mellin1}
 \Phi_{\X}(\tau)\leq J(\tau)\leq \Phi_{\X}(e^{-\lambda N} \tau),
\end{equation}
and 
\begin{equation}\label{Mellin2}
 \pr_x\Big(L(1,\chi_d)>e^{\gamma}\tau\Big)+O\left(\frac{\log x}{\sqrt{x}}\right)\leq J_x(\tau)\leq \pr_x\Big(L(1,\chi_d)>e^{\gamma-\lambda N}\tau\Big)+O\left(\frac{\log x}{\sqrt{x}}\right),
\end{equation}
since there are at most $O(\log x)$ exceptional discriminants $d\leq x$.
Now, using that $|e^{\lambda s}-1|\leq 3$ we get
$$
\int_{\kappa-i\infty}^{\kappa-iY}+ \int_{\kappa+iY}^{\kappa+i\infty}\ex\left(L(1, \X)^s\right)(e^{\gamma}\tau)^{-s}\left(\frac{e^{\lambda s}-1}{\lambda s}\right)^N\frac{ds}{s}\ll \frac{1}{N}\left(\frac{3}{\lambda Y}\right)^N\ex\left(L(1, \X)^{\kappa}\right)
(e^{\gamma}\tau)^{-\kappa}.
$$
A similar argument together with Theorem \ref{ComplexMoments}  shows that 
\begin{align*}
\int_{\kappa-i\infty}^{\kappa-iY}+ \int_{\kappa+iY}^{\kappa+i\infty}M_x(s) (e^{\gamma}\tau)^{-s}\left(\frac{e^{\lambda s}-1}{\lambda s}\right)^N\frac{ds}{s}
&\ll \frac{1}{N}\left(\frac{3}{\lambda Y}\right)^NM_x(\kappa)(e^{\gamma}\tau)^{-\kappa}\\
&\ll \frac{1}{N}\left(\frac{3}{\lambda Y}\right)^N\ex\left(L(1, \X)^{\kappa}\right)(e^{\gamma}\tau)^{-\kappa}.
\end{align*}
Combining these bounds with Theorem \ref{ComplexMoments} and using that $|(e^{\lambda s}-1)/\lambda s|\leq 4$ we derive
\begin{equation}\label{difference1}
J_x(\tau)- J(\tau)\ll \frac1N\left(\frac{3}{\lambda Y}\right)^N\ex\left(L(1, \X)^{\kappa}\right)(e^{\gamma}\tau)^{-\kappa}+  \frac{Y}{\kappa}4^N (e^{\gamma}\tau)^{-\kappa} \exp\left(-\frac{\log x}{20\log_2 x}\right).
\end{equation}
Thus, choosing $N=[\log\log x]$ and $\lambda= e^{10}/Y$ we deduce from \eqref{order} that 
\begin{equation}\label{difference2}
J_x(\tau)- J(\tau)\ll \frac{1}{(\log x)^{3}} \Phi_{\X}(\tau).
\end{equation}
On the other hand, it follows from Theorem \ref{ExponentialDecay} that 
$$
\Phi_{\X}(e^{\pm \lambda N}\tau)
= \Phi_{\X}(\tau)\left(1+O\left(\frac{e^{\tau}(\log_2 x)^2\log _3 x}{\log x}\right)\right).
$$
Combining this last estimate with \eqref{Mellin1}, \eqref{Mellin2}, and \eqref{difference2} we obtain
\begin{align*}
\pr_x(L(1,\chi_d)>e^{\gamma}\tau)&
\leq J_x(\tau)+O\left(\frac{\log x}{\sqrt{x}}\right) \\
&\leq J(\tau)+ O\left(\frac{\Phi_{\X}(\tau)}{(\log x)^{5}}+\frac{\log x}{\sqrt{x}}\right)\\
&\leq \Phi_{\X}(\tau)\left(1+O\left(\frac{e^{\tau}(\log_2 x)^2\log _3 x}{\log x}\right)\right)+ O\left(\frac{\log x}{\sqrt{x}}\right),
\end{align*}
and 
\begin{align*}
\pr_x(L(1,\chi_d)>e^{\gamma}\tau)&
\geq J_x(e^{\lambda N}\tau)+O\left(\frac{\log x}{\sqrt{x}}\right) \\
&\geq J(e^{\lambda N}\tau)+ O\left(\frac{\Phi_{\X}(\tau)}{(\log x)^{5}}+ \frac{\log x}{\sqrt{x}}\right)\\
&\geq \Phi_{\X}(\tau)\left(1+O\left(\frac{e^{\tau}(\log_2 x)^2\log _3 x}{\log x}\right)\right)+ O\left(\frac{\log x}{\sqrt{x}}\right).\\
\end{align*}
The result follows from these estimates together with the fact that  $\Phi_{\X}(\tau)\gg x^{-1/4}$ in our range of $\tau$, by Theorem \ref{ExponentialDecay}.

\end{proof}

We now deduce Theorem \ref{MainResult}.

\begin{proof}[Proof of Theorem \ref{MainResult}]
By the class number formula \eqref{Class2}, we have $h(d)\geq  2e^{\gamma}\frac{\sqrt{d}}{\log d} \cdot \tau $ if and only if 
$$L(1,\chi_d)\geq e^{\gamma} \tau\left(1+ 2\frac{\log\left(1+\sqrt{1-1/d}\right)}{\log d}\right).$$
The desired estimate follows from Theorems \ref{TheoremDistribution} and \ref{ExponentialDecay}, which show that the number of $d\in \D$ such that $d\geq \sqrt{x}$ and $L(1,\chi_d)\geq e^{\gamma}\tau (1+O(1/\log d))$ is 
\begin{align*}
 &|\D|\cdot \Phi_{\X}\Big(\tau \big(1+O(1/\log x)\big)\Big)\left(1+O\left(\frac{e^{\tau}(\log_2 x)^2\log _3 x}{\log x}\right)\right)\\
 =&|\D| \cdot  \exp\left(-\frac{e^{\tau-C_0}}{\tau}\left(1+ O\left(\frac{1}{\tau}\right)\right)\right).
 \end{align*}
The analogous estimate for the number of discriminants $d\in \D$ such that $h(d)\leq 2e^{-\gamma}\zeta(2)\frac{\sqrt{d}}{\log d}\cdot \frac{1}{\tau}$ follows along the same lines.
\end{proof}

\section{The number of quadratic fields with a given class number: Proof of Theorem \ref{thm:F-average}}
\label{Sec6}
Recall that $\F$ is  the number of discriminants in the family $\mathcal{D}_{\text{ch}}$ with class number $h$. In order to obtain an asymptotic formula for $\sum_{h\leq H} \F$,
we first show that we can restrict our attention to discriminants $d\in \mathcal{D}_{\text{ch}}$ such that  $d\leq X:=H^2(\log H)^8$.  To this end we use Tatuzawa's refinement of Siegel's Theorem \cite{Ta}, which states that for large $d$, we have $L(1,\chi_d)\geq 1/(\log d)^2$ with at most one exception. This implies that $h(d)\geq \sqrt{d}\cdot(\log d)^{-3}$ with at most one exception, by the class number formula \eqref{Class2}. Thus, if $h(d)\leq H$ then we must have $d\leq X$, with at most one exception. This yields
  \begin{equation}
\label{eq:F-average-truncation}
	\sum_{h\leq H} \F
	= \sum_{\substack{d \in \mathcal{D}_{\textup{ch}}(X)\\ h(d) \leq H}} 1
	+ O(1).
\end{equation}
\begin{proof}[Proof of Theorem \ref{thm:F-average}] 
We estimate the main term in \eqref{eq:F-average-truncation} by using the smoothing function
\[
	I_{c,\lambda,N}(y):=\frac{1}{2\pi i}\int_{c-i\infty}^{c+i\infty}y^{s}\left(\frac{e^{\lambda s}-1}{\lambda s}\right)^{N}\frac{ds}{s},
\]
where $c=1/\log H$, $N$ is a positive integer, and $0<\lambda\leq 1$ is a real number to be chosen later. Using (\ref{eq:F-average-truncation}) together with  \eqref{SmoothPerron3}, we obtain
\begin{equation}
\label{eq:F(h)-integral-inequality}
\sum_{h\leq H}\F \leq\frac{1}{2\pi i}\int_{c-i\infty}^{c+i\infty}\sum_{\substack{d\in \mathcal{D}_{\textup{ch}}(X)}
}\frac{H^{s}}{h(d)^{s}}\left(\frac{e^{\lambda s}-1}{\lambda s}\right)^{N}\frac{ds}{s}+O\left(1\right)\leq\sum_{h\leq e^{\lambda N}H}\F.
\end{equation}
By  \eqref{MontWein}, Theorem \ref{ComplexMoments}, and Remark \ref{RemExceptional}, there exists a constant $B>0$ such that for all $x\geq \sqrt{X}$ and any complex number $z$ with $\re(z) > -1/2$ and  $\vert z \vert \leq T := B \log X / (\log_2 X \log_3 X) $, we have
\begin{equation}\label{RecallComplex}
\sum_{d\in\mathcal{D}_{\textup{ch}}(x)}L(1,\chi_{d})^{z}=C_{1}x^{1/2}\mathbb{E}(L(1,\mathbb{X})^{z})+O\left(x^{1/2}\exp\left(-\frac{\log x}{20\log\log x}\right)\right)
\end{equation}
where
\[
	C_{1}=\frac{1}{2}\prod_{p>2}\left(1-\frac{c(p)}{p^{2}}\right).
\]
For brevity, we define
$$
	\ell(x):=\frac{\sqrt{x}}{\log(\sqrt{x-1}+\sqrt{x})}.
$$
Then we have $h(d)=\ell(d)L(1, \chi_d)$ by the class number formula \eqref{Class2}. Hence, using integration by parts, we deduce from \eqref{RecallComplex} that
\begin{multline}
\label{eq:average-h(d)^(-s)}
	\sum_{\substack{d\in\mathcal{D}_{\text{ch}}(X)}}
	h(d)^{-s}=\frac{C_{1}}{2}\mathbb{E}(L(1,\mathbb{X})^{-s})
	\left(
		\int_{1}^{X}x^{-1/2}\ell(x)^{-s}		dx
	\right)
	\\
	+O\left(X^{1/2}\exp\left(-\frac{\log X}{50\log\log X}\right)\right)
\end{multline}
for $\vert s \vert \leq T $ and $\text{Re}(s) = c$.

 Since $h(d)\geq 1$ and $\vert e^{\lambda s}-1\vert\leq 3$ for large enough $H$, we see that the contribution of the region $\vert s\vert>T$ to the integral in (\ref{eq:F(h)-integral-inequality}) is 
\[
\ll X^{1/2}\left(\frac{3}{\lambda}\right)^{N}\int_{\substack{\vert s\vert>T\\
\re(s)=c
}
}\frac{\vert ds\vert}{\vert s\vert^{N+1}}\ll\frac{X^{1/2}}{N}\left(\frac{3}{\lambda T}\right)^{N}.
\]
We also have $\vert(e^{\lambda s}-1)/\lambda s\vert\leq 4$ for large enough $H$. Therefore, it follows from (\ref{eq:average-h(d)^(-s)}) that the integral in (\ref{eq:F(h)-integral-inequality}) equals
\begin{equation}
\label{eq:F-truncated-integral}
\frac{1}{2\pi i}\int_{\substack{\vert s\vert\leq T\\
\re(s)=c
}
}\frac{C_{1}}{2}\mathbb{E}(L(1,\mathbb{X})^{-s})\left(\int_{1}^{X}x^{-1/2}\ell(x)^{-s}	dx\right)H^{s}\left(\frac{e^{\lambda s}-1}{\lambda s}\right)^{N}\frac{ds}{s}+\mathcal{E}
\end{equation}
where
\[
	\mathcal{E}\ll\frac{X^{1/2}}{N}\left(\frac{3}{\lambda T}\right)^{N}+\frac{4^{N}T}{c}X^{1/2}\exp\left(-\frac{\log X}{50\log\log X}\right).
\]
Choosing $\lambda=e^{10}/T$ and $N=[A\log\log H]$ for a constant $A>1$ gives
\[
	\mathcal{E}\ll_{A}\frac{H}{(\log H)^{A}}.
\]
Extending the main term of (\ref{eq:F-truncated-integral}) to the entire line $\re(s)=c$, we see that it equals
\begin{multline}
\label{eq:F-expected-integral}
\frac{1}{2\pi i}\int_{c-i\infty}^{c+i\infty}\frac{C_{1}}{2}\mathbb{E}(L(1,\mathbb{X})^{-s})\left(\int_{1}^{X}x^{-1/2}\ell(x)^{-s}dx\right)H^{s}\left(\frac{e^{\lambda s}-1}{\lambda s}\right)^{N}\frac{ds}{s}\\+O\left(\mathbb{E}\big(L(1,\mathbb{X})^{-c}\big)\frac{X^{1/2}}{N}\left(\frac{3}{\lambda T}\right)^{N}\right)
\\
	= \frac{C_1}{2} \mathbb{E} \left(
		\int_1^X I_{c,\lambda,N} \left(
			\frac{H}{\ell(x) L(1,\mathbb{X})}\right)
		x^{-1/2} dx
	\right)
	+ O_A\left( \frac{H}{(\log H)^A} \right).	
\end{multline}
To shorten our notation we define 
$Y = H L(1,\mathbb{X})^{-1}$. Then
it follows from \eqref{SmoothPerron3} that for $1< x\leq X$ we have
\[
I_{c,\lambda,N}\left(\frac{H}{\ell(x) L(1,\mathbb{X})}\right)=\begin{cases}
1 & \text{ if }\ell(x)\leq Y,\\
\in[0,1] & \text{ if }Y< \ell(x)\leq e^{\lambda N}Y,\\
0 & \text{ if }\ell(x)>e^{\lambda N}Y.
\end{cases}
\]
Furthermore, note that  $
\ell(x)=(2\sqrt{x})/(\log x+\psi(x))
$ for some $\psi(x)$ that satisfies $0\leq \psi(x)\leq  2$. Thus, if for a constant $c$ we define
\[
	\ell_c(x)=\frac{2\sqrt{x}}{\log x + c},
\]
then we have $ \ell_{2}(x) \leq \ell(x) \leq \ell_0(x)$, and therefore
\[
I_{c,\lambda,N}\left(\frac{H}{\ell(x) L(1,\mathbb{X})}\right)=\begin{cases}
1 & \text{ if }\ell_{0}(x)\leq Y,\\
0 & \text{ if }\ell_2(x)>e^{\lambda N}Y,\\
\in[0,1] & \text{otherwise}.\\

\end{cases}
\]
For any $c>0$ the function $\ell_c(x)$ is strictly increasing on $(e^2, \infty)$ and hence is invertible on this domain.  Let $g_c$ be its inverse function.   Then we obtain
\begin{multline}
\label{eq:integral-I-dx}
\int_{1}^{X}I_{c,\lambda,N}\left(\frac{H}{\ell(x) L(1,\mathbb{X})}\right)x^{-1/2}dx=2\min\left(g_0(Y)^{1/2},X^{1/2}\right)
\\
+O\left(g_2(e^{\lambda N}Y)^{1/2}-g_0(Y)^{1/2}+1\right).
\end{multline}
Note that for any $c>0$ we have 
	$g_c(x)=x^{2}\big(\log x+ O_c(\log\log x)\big)^2$ for $x\geq e^2$. Moreover, if $g_0(Y)> X$ then $Y> \ell_0(X)$ and hence $L(1,\X)\ll 1/(\log H)^3 $. 
Therefore, it follows from Theorem \ref{ExponentialDecay} that
\begin{align*}
\mathbb{E}\left(\min\left(g_0(Y)^{1/2},X^{1/2} \right)\right)
&=\mathbb{E}\left(g_0(Y)^{1/2}\right) +O\Big(X^{1/2} \exp\left(-\log^2 H\right)\Big)\\
&= \mathbb{E}\left(L(1, \X)^{-1}\right) H\log H  + O(H\log_2 H).
\end{align*}
Furthermore, a similar argument shows that
\begin{align*}
\ex\left(g_2(e^{\lambda N}Y)^{1/2}-g_0(Y)^{1/2}\right)
&= \left(e^{\lambda N}-1\right) \mathbb{E}\left(L(1, \X)^{-1}\right) H\log H + O(H\log_2 H)\\
&\ll H (\log_2 H)^2 \log_3 H.
\end{align*}
Combining these estimates with  with equations (\ref{eq:F(h)-integral-inequality}), (\ref{eq:F-truncated-integral}), (\ref{eq:F-expected-integral}) and (\ref{eq:integral-I-dx}) we deduce
$$
\sum_{h\leq H}\F \leq C_1\mathbb{E}\left(L(1, \X)^{-1}\right)H\log H + O\left(H (\log_2 H)^2 \log_3 H\right)\leq\sum_{h\leq e^{\lambda N}H}\F.
$$
Replacing $e^{\lambda N}H$ by $H$ in the right hand side inequality yields
$$
\sum_{h\leq H}\F= C_1\mathbb{E}\left(L(1, \X)^{-1}\right)H\log H + O\big(H (\log_2 H)^2 \log_3 H\big).
$$
Finally, by \eqref{expectation} and the independence of the $\X(p)$'s, we find that $C_1\ex(L(1,\X)^{-1})$ equals
\begin{align*}
	& \frac12\prod_{p>2}\left(1-\frac{c(p)}{p^2}\right)\left(\alpha_p \left(1-\frac1p\right)+\beta_p\left(1+\frac1p\right)+\gamma_p\right)\\
= & \frac12\prod_{p>2}\left(\frac{1}{2} \left(1-\frac{c(p)+1}{p}\right)\left(1-\frac1p\right)+ \frac{1}{2} \left(1-\frac{c(p)-1}{p}\right)\left(1+\frac1p\right)+ c(p)\left(\frac1p-\frac{1}{p^2}\right)\right)\\
=& \frac12\prod_{p>2} \left(1-\frac{c(p)-1}{p^2}\right),\\
\end{align*}
which completes the proof.
\end{proof}

\end{document}